\numberwithin{equation}{section}
\numberwithin{table}{section}
\numberwithin{figure}{section}
\newcommand{\Oh}{{\mathcal{T}_h}}
\newcommand{\Eh}{\mathcal{E}_h}
\newcommand{\dK}{{\partial K}}
\newcommand{\Vh}{\boldsymbol{V}_h}
\newcommand{\Qh}{\mathring{{Q}_h}}
\newcommand{\Mh}{M_h}
\newcommand{\MMh}{\bld M_h}
\newcommand{\VV}{{\boldsymbol V}}
\newcommand{\VVdiv}{{\boldsymbol V}_h^\mathrm{div}}
\newcommand{\MM}{{\bld{M}}}
\newcommand{\Mth}{{\bld{M}}^t_h}
\newcommand{\Mnh}{{\bld{M}}^n_h}
\newcommand{\GG}{{\mathcal{G}}}
\newcommand{\vv}{\bld{v}}
\newcommand{\wwhat}{\widehat{w}}
\newcommand{\uthat}{\widehat{\bld u}_t}
\newcommand{\unhat}{\widehat{\bld u}_{n}}
\newcommand{\vthat}{\widehat{\bld v}_t}
\newcommand{\zthat}{\widehat{\bld z}_t}
\newcommand{\zzhat}{\widehat{\bld z}}
\newcommand{\Pim}{P_M}
\newcommand{\egg}{\mathrm{e}_{L}}
\newcommand{\euu}{\bld{e}_{u}}
\newcommand{\euuhat}{\bld{e}_{\widehat u_t}}
\newcommand{\epp}{{e}_{p}}
\newcommand{\dgg}{\mathrm{\delta}_{L}}
\newcommand{\duu}{\bld{\delta}_{u}}
\newcommand{\duuhat}{\bld{\delta}_{\widehat u_t}}
\newcommand{\dpp}{{\delta}_{p}}
\newcommand{\delPhi}{\delta_{\Phi}}
\newcommand{\delphi}{\bld \delta_{\phi}}
\newcommand{\delvphi}{\delta_{\varphi}}
\newcommand{\mg}{\mathrm{g}}
\newcommand{\mr}{\mathrm{r}}
\newcommand{\QQ}{{Q}}
\newcommand{\qq}{q}
\newcommand{\BDM}{\bld{\mathrm{BDM}}}
\newcommand{\RT}{\bld{\mathrm{RT}}}
\newcommand{\BDFM}{\bld{\mathrm{BDFM}}}
\newcommand{\vvhat}{\widehat{\bld{v}}}
\newcommand{\Pigg}{P_\GG}
\newcommand{\Pimm}{P_{\bld{M}^t}}
\newcommand{\Piww}{\Pi_{\VV}}
\newcommand{\Piq}{P_{Q}}
\newcommand{\bint}[2]{( #1\,,\,#2 )_{\Oh}}
\newcommand{\bintKK}[2]{( #1\,,\,#2 )_{K}}
\newcommand{\bintEh}[2]{\langle #1\,,\,#2 \rangle_{\partial{\Oh}}}
\newcommand{\bintK}[2]{\langle #1\,,\,#2 \rangle_{\partial{K}}}
\newcommand{\bintF}[2]{\langle #1\,,\,#2 \rangle_{F}}
\newcommand{\inp}[2]{(#1\,, \, #2)_{\Oh}}
\newcommand{\norm}[1]{\|#1\|_{\Oh}}
\newcommand{\normEh}[1]{\|#1\|_{\partial\Oh}}
\newcommand{\normh}[1]{\|#1\|_{1,\Omega}}
\newcommand{\tr}{\mathrm{tr}}
\newcommand{\n}{\boldsymbol{n}}
\newcommand{\bld}[1]{\boldsymbol{#1}}
\newcommand{\bR}{\mathbb R}
\newcommand{\jmp}[1]{\,[\![#1]\!]}
\newcommand{\pol}{\EuScript{P}}
\newcommand{\bpol}{\boldsymbol{\pol}}
\newcommand{\curls}{{{\nabla\times}}}
\newcommand{\divs}{{\nabla\cdot}}
\newcommand{\grads}{{\nabla}}
\newcommand{\divv}{{\bld{\nabla\cdot}}}
\newcommand{\gradv}{{\bld{\nabla}}}
\newcommand{\ml}{\mathrm{L}}
\newcommand{\vertiii}[1]{{\left\vert\kern-0.25ex\left\vert\kern-0.25ex\left\vert #1
    \right\vert\kern-0.25ex\right\vert\kern-0.25ex\right\vert}}
\definecolor{red}{rgb}{1,0,0}
\title{Parameter-free superconvergent $H(\mathrm{div})$-conforming HDG methods
for the Brinkman equations.}
\author{
Guosheng Fu
\thanks{Division of Applied Mathematics, Brown University, Providence, %
  RI 02912, USA,  email: {\tt guosheng\_fu@brown.edu}.}
\and
Yanyi Jin
        \thanks{Department of Mathematics, City University of Hong Kong,
              83 Tat Chee Avenue, Kowloon, Hong Kong, China, email: {\tt yyjin2-c@my.cityu.edu.hk}.}
\and
Weifeng Qiu
\thanks{Corresponding author. Department of Mathematics, City University of Hong Kong,
              83 Tat Chee Avenue, Kowloon, Hong Kong, China, email: {\tt weifeqiu@cityu.edu.hk}.
              The work of Weifeng Qiu was partially supported by a grant from the Research Grants
              Council of the Hong Kong Special Administrative Region, China (Project No. CityU 11302014).}
}
\begin{document}

\maketitle

\begin{abstract}
In this paper, we present new parameter-free {superconvergent} $H(\mathrm{div})$-conforming HDG methods for 
the Brinkman equations on both simplicial and rectangular meshes. 
{The methods are based on a velocity gradient-velocity-pressure formulation, which can be considered as a natural 
extension of the $H(\mathrm{div})$-conforming HDG method (defined on simplicial meshes) for the Stokes flow 
[Math. Comp. 83(2014), pp. 1571-1598].}

We obtain optimal  $L^2$-error estimate for the velocity in both the Stokes-dominated regime (high viscosity/permeability ratio) and Darcy-dominated regime (low viscosity/permeability ratio). 
We also obtain superconvergent $L^2$-estimate of one order higher for
a suitable projection of the velocity error in the Stokes-dominated regime.
Moreover, thanks to $H(\mathrm{div})$-conformity of the velocity, our velocity error estimates are independent of the pressure regularity. {
Furthermore, we  provide a discrete $H^{1}$-stability result of the velocity field, which is essential
in the error analysis of the natural generalization of these new HDG methods to the incompressible Navier-Stokes equations.}

Preliminary numerical results on both triangular and rectangular meshes in two dimensions confirm our theoretical predictions.
\end{abstract}

 \begin{keywords}
HDG, $H(\mathrm{div})$-conforming,  superconvergence, Brinkman
\end{keywords}

\begin{AMS}
65N30, 65M60, 35L65
\end{AMS}

\thispagestyle{plain} \markboth{G. Fu, et.~al.}{
$H(\mathrm{div})$-conforming HDG for the Brinkman equations}


\section{Introduction}
\label{sec:intro}
In this paper, we devise superconvergent
$H(\mathrm{div})$-conforming hybridizable discontinuous Galerkin (HDG) method for the following Brinkman equations in velocity gradient-velocity-pressure formulation:
\begin{subequations}
\label{nb}
\begin{align}
\label{nba}
\ml=\nabla\boldsymbol{u}&\;\qquad \text{in}\quad\Omega,\\
\label{nbb}
-\nu\nabla\cdot \ml+\gamma\boldsymbol{u}+\nabla p=\boldsymbol{f}&\;\qquad\text{in}\quad\Omega,\\
\label{nbc}
\nabla\cdot\boldsymbol{u}=g&\;\qquad \text{in}\quad \Omega,\\
\label{nbd}
\boldsymbol{u}\cdot\boldsymbol{n}=0&\;\qquad \text{on}\quad \partial\Omega,\\
\label{nbe}
\nu(I_{d}-\boldsymbol{n}\otimes\boldsymbol{n})\boldsymbol{u}=0&\;\qquad \text{on}\quad \partial\Omega,\\
\label{nbf}
\int_\Omega p=0&\;,
\end{align}
\end{subequations}
where $\ml$ is the velocity gradient,
$\boldsymbol{u}$ is the velocity, $p$ is the pressure, $\nu$ is the effective viscosity constant,
$\gamma\in {L}^{\infty}(\Omega)^{d\times d}$ is inverse of the permeability tensor,
and $\boldsymbol{f}\in{L}^2(\Omega)^d$
is the external body force. The domain $\Omega\subset\mathbb{R}^d$ is a polygon $(d=2)$ or polyhedron $(d=3)$.


One challenging aspect of numerical discretization of the Brinkman equations is the construction of stable finite element methods
in both Stokes-dominated and Darcy-dominated regimes. We refer to such methods as uniformly stable methods.
Uniformly stable methods for the Brinkman equations have been extensively studied for the classical velocity-pressure formulation, including the nonconforming methods with an $H(\mathrm{div})$-conforming velocity field \cite{MardalTaiWinther02,TaiWinther06,XieXuXue08,GuzmanNeilan12}, the conforming  methods \cite{XieXuXue08,JuntunenStenberg10}, 
the stabilized methods \cite{XieXuXue08,BadiaCodina09, JuntunenStenberg10}, the $H(\mathrm{div})$-conforming discontinuous Galerkin method \cite{KonnoStenberg11}, and the hybridized $H(\mathrm{div})$-conforming discontinuous Galerkin method \cite{KonnoStenberg12},
and for other alternative formulations, including the vorticity-velocity-pressure formulation \cite{VassilevskiVilla14, AGMR15}, the pseudostress-based formulation \cite{GaticaGaticaSequeira15}, and a dual-mixed formulation  \cite{HowellNeilanWalkington16}.

In this paper, we propose and study a class of  high-order,
parameter-free, $H(\mathrm{div})$-conforming HDG method for  the Brinkman equations \eqref{nb} on both simplicial and rectangular meshes. This is the first HDG method for the Brinkman equations based on a velocity gradient-velocity-pressure formulation.
Our method can be considered as a natural, stable extension to the Brinkman equations of the high-order,  parameter-free, $H(\mathrm{div})$-conforming HDG method for the Stokes problem on simplicial meshes \cite{CockburnSayasHDGStokes14}.
%
Three distinctive properties of the method make it attractive.
{Firstly, our method provides optimal error estimate in $L^2$-norms for the velocity that is 
robust with respect to viscosity/permeability ratio $\nu/\gamma$ (Theorem~\ref{thm:ns-error}, Corollary~\ref{coro:ns-error}), 
and superconvergent error estimate in the $L^2$-norm 
of one order higher for a suitable projection of the velocity error (under a regularity assumption on the dual problem). 
To the best of our knowledge, this is the first superconvergent velocity estimate for the Brinkman equations.
Secondly, thanks to $H(\mathrm{div})$-conformity of the velocity, our velocity error estimates are independent of 
the pressure regularity (see Corollary~\ref{coro:ns-error} and Theorem~\ref{thm:ns-superror}). 
Such pressure-robustness property is highly appreciated for incompressible flow problems \cite{Link14,LinkeMerdon16}. 
Finally, our error analysis, which is quite different from and more straightforward than that in \cite{CockburnSayasHDGStokes14} for the Stokes flow, is based on a so-called discrete $H^1$-stability result (see Theorem~\ref{thm:dh1}), which is the essential ingredient in the  analysis of
velocity gradient-velocity-pressure HDG formulation of 
the incompressible Navier-Stokes equations.
}
We specifically remark that no stabilization parameter enters in our method, which has to be  compared with the hybridized $H(\mathrm{div})$-conforming discontinuous Galerkin method \cite{KonnoStenberg12} in the classical velocity-pressure formulation, where Nitsche's penalty method is used to impose tangential continuity of the velocity field and the stabilization parameter needs to be ``sufficiently large''.  

The organization of the paper is as follows. In Section 2, we introduce the parameter-free $H(\mathrm{div})$-conforming HDG 
method and give the main results on a priori error estimates. In Section 3, we prove our main results in Section 2. 
In Section 4, we discuss the hybridization of the $H(\mathrm{div})$-conforming HDG method. {In Section 5, we provide 
preliminary two-dimensional numerical experiments on triangular and rectangular meshes to validate our theoretical results}. 
We end in Section 6 with some concluding remarks.

\section{Main results: Superconvergent $H(\mathrm{div})$-conforming HDG}
\label{sec:main}
{In this section, we first introduce the notation that will be used throughout the paper, and then present the finite element spaces that define the $H(\mathrm{div})$-conforming HDG methods. We conclude with an a priori error estimates along with a key 
inequality that we call {\it discrete $H^1$-stability}.
}
\subsection{Meshes and trace operators}
We denote by $\Oh:=\{K\}$ (the mesh) a shape-regular conforming triangulation of the domain $\Omega\subset \bR^d$
into {\it affine-mapped} simplices (triangles if $d=2$, tetrahedron if $d=3$) or hypercubes (squares if $d=2$, cubes if $d=3$),
and by $\Eh$ (the mesh skeleton) the set of facets $F$ (edges if $d=2$, faces if $d=3$) of the elements $K \in \Oh$.
Let $\mathcal{F}(K)$ denote the set of facets $F$ of the element K.
We set $h_F := \mathrm{diam}(F), h_K := \mathrm{diam}(K)$ and
$h := \max_{K\in\Oh}h_K$.

\newcommand{\Kbar}{\underline{\mathsf{K}}}
\newcommand{\Fbar}{\underline{\mathsf{F}}}

Let $\Kbar$ be the reference element ($d$-dimensional simplex or hypercube), and $\Fbar$ be the reference facet 
($d-1$-dimensional simplex or hypercube).
We denote
$\Phi_K: \Kbar\rightarrow K$ and $\Phi_F: \Fbar\rightarrow F$ as the associated affine mappings.

For a $d$-dimensional vector-valued function $\vv$ on an element  $K\subset \bR^d$ with sufficient regularity,
we denote
by
\begin{align}
\label{trace}
\tr_t^F(\vv):= \left.\left(\vv-(\vv\cdot\n_F)\,\n_F)\right)\right|_F
\hspace{0.5cm}\text{ and }\hspace{0.5cm}\tr_n^F(\vv):= \left.\left(\vv\cdot\n_F\right)\n_F \right|_F
\end{align}
the tangential and normal traces of $\vv$ on the facet $F\in \mathcal{F}(K)$, where $\n_{F}$ is
the unit normal vector to $F$. Note that the above trace operators are independent of the direction of
the normal $\n_F$. Whenever there is no confusion, we suppress the superscript and denote
$\tr_t(\vv)$ and $\tr_n(\vv)$ as the related tangential and normal traces, respectively.
With an abuse of notation, we also denote
\begin{align*}
\tr_t(\vvhat):= \left.\left(\vvhat-(\vvhat\cdot\n_F)\,\n_F)\right)\right|_F
\hspace{0.5cm}\text{ and }\hspace{0.5cm}\tr_n(\vvhat):= \left.\left(\vvhat\cdot\n_F\right)\n_F \right|_F
\end{align*}
for a $d$-dimensional vector-valued function $\vv$ on a facet  $F\subset \bR^{d-1}$ with sufficient regularity.

\subsection{The finite element spaces}
Now, we define the finite element spaces associated with the mesh $\Oh$ and mesh skeleton $\Eh$
via appropriate mappings (cf. \cite{BrennerScott08}) from (polynomial) spaces on the reference elements.

\newcommand{\vbar}{\underline{\bld{\mathsf{ v}}}}
\newcommand{\qbar}{\underline{{\mathsf{ q}}}}
\newcommand{\vhatbar}{\underline{\widehat{\bld{\mathsf{v}}}}}

We use the following mapped finite element spaces on the mapped element $K$ and facet $F$:
\begin{subequations}
\label{hdg-space}
\begin{alignat}{3}
\label{hdg-space-g-row}
\GG^\mathrm{row}(K):=&\;\{\vv\in{L}^2(K)^{d}:\;&&\;\vv=
\frac{1}{\mathrm{det}\,\Phi_K'}\Phi_K'\,\vbar\circ\Phi_K^{-1},\;\;\vbar\in\GG^\mathrm{row}(\Kbar)\},
\\
\label{hdg-space-v}
\VV(K):=&\;\{\vv\in{L}^2(K)^d:&&\;
\vv=
\frac{1}{\mathrm{det}\,\Phi_K'}\Phi_K'\,\vbar\circ\Phi_K^{-1},\;\;\vbar\in\VV(\Kbar)\},
\\
\label{hdg-space-q}
Q(K):=&\;\{\qq\in{L}^2(K):&&\;\qq=\qbar\circ\Phi_K^{-1},\;\;\qbar\in \QQ(\Kbar)\},
\\
\label{hdg-space-m}
\MM(F):=&\;\{\vvhat\in{L}^2(F)^d:&&\;\vvhat=\vhatbar\circ\Phi_F^{-1},\;\;\vhatbar\in \MM(\Fbar)\}.
\end{alignat}
\end{subequations}
Here $\Phi_K$ and $\Phi_F$ are the affine mappings introduced above, and $\Phi_K'$ is the Jacobian matrix of the mapping $\Phi_K$.
Note that the vector spaces in \eqref{hdg-space-g-row} and \eqref{hdg-space-v}
are obtained from the well-known {\it Piola transformation} which preserve normal continuity (cf. \cite{Duran08}).

The polynomial spaces on the reference elements are given in Table \ref{table-example-m}.
\begin{table}[ht]
\caption{The reference finite element spaces}
 \centering
\begin{tabular}{l c c c c}
\hline
\noalign{\smallskip}
element & $\GG^\mathrm{row}(\Kbar)$ & $\VV(\Kbar)$ &
 $\QQ(\Kbar)$ & $\MM(\Fbar)$ \\
\noalign{\smallskip}
\hline
\noalign{\smallskip}
simplex & $\pol_k(\Kbar)^d$
& $\RT_k(\Kbar)$
& $\pol_k(\Kbar)$ & $\pol_k(\Fbar)^d$\\
\noalign{\smallskip}
\hline
\noalign{\smallskip}
hypercube &
$\BDM_k(\Kbar)$
&
$\BDFM_k(\Kbar)$
& $\pol_k(\Kbar)$& $\pol_k(\Fbar)^d$\\
\noalign{\smallskip}
\hline
\end{tabular}
\label{table-example-m}
\end{table}

Here we denote $\pol_k(D)$ and $\widetilde{\pol}_k(D)$  as the polynomials of degree no
greater than $k$, and homogeneous polynomials of degree $k$, respectively, on the domain $D$.
The vector space $\RT_k(\Kbar)$
on the reference simplex is the following Raviart-Thomas-Ned\'el\'ec space, see
\cite{RaviartThomas77,Nedelec80},
\[
 \RT_k(\Kbar):=\pol_k(\Kbar)^d\oplus \bld x\,\widetilde{\pol}_k(\Kbar),
\]
the vector space $\BDM_k(\Kbar)$ on the reference hypercube is the following Brezzi-Douglas-Marini space,
see \cite{BrezziDouglasMarini85,BrezziDouglasDuranFortin87,ArnoldAwanou14},
\begin{align*}
  \BDM_k(\Kbar):=\left\{\begin{tabular}{l l}
              $\pol_k(\Kbar)^d\oplus \curls\{x\,y^{k+1},y\,x^{k+1}\}$ &
              if $d=2$,
                            \vspace{0.3cm}
\\
              $\pol_k(\Kbar)^d\oplus
\curls\left\{
 \begin{tabular}{c}
   $x\,\widetilde{\pol}_k(y,z)(y\grads z-z\grads y),$\\
   $y\,\widetilde{\pol}_k(z,x)(z\grads x-x\grads z),$\\
 $z\,\widetilde{\pol}_k(x,y)(x\grads y-y\grads x)$\\
 \end{tabular}
\right\}$& if $d=3$,
              \end{tabular}\right.
\end{align*}
and the vector space $\BDFM_k(\Kbar)$ on the reference hypercube is the following Brezzi-Douglas-Fortin-Marini space,
see \cite{BrezziDouglasFortinMarini87},
\begin{align*}
  \BDFM_k(\Kbar):=\left\{\begin{tabular}{l l}
              $\pol_k(\Kbar)^d\oplus \left[
              \begin{tabular}{c}
               $x\,\widetilde\pol_k(\Kbar)$
               \vspace{0.2cm}\\
               $y\,\widetilde\pol_k(\Kbar)$
              \end{tabular}
              \right]$ &
              if $d=2$,
                            \vspace{0.3cm}
\\
              $\pol_k(\Kbar)^d\oplus
              \left[
              \begin{tabular}{c}
               $x\,\widetilde\pol_k(\Kbar)$
               \vspace{0.2cm}\\
               $y\,\widetilde\pol_k(\Kbar)$
               \vspace{0.2cm}\\
               $z\,\widetilde\pol_k(\Kbar)$
               \end{tabular}
              \right]
$& if $d=3$.
              \end{tabular}\right.
\end{align*}

Next, for the vector-valued finite element space $\GG^\mathrm{row}(K)$ given
in \eqref{hdg-space-g-row}, we denote
\begin{align}
\label{gg-space}
 \GG(K):=
\left[\GG^\mathrm{row}(K)\right]^d
 \end{align}
as the tensor-valued space such that each of whose row is the space $\GG^\mathrm{row}(K)$.

We use the following finite element spaces on the mesh $\Oh$ and mesh skeleton $\Eh$
to define the $H(\mathrm{div})$-conforming HDG method in the next
section.
\begin{subequations}
\label{hdg-space-div}
\begin{alignat}{3}
\label{hdg-space-g}
\GG_h:= &\; \{\mg\in L^2(\Oh)^{d\times d}:\;&&\mg|_K\in \GG(K),\;\; K\in\Oh\}
\\
\label{hdg-space-v-0}
\VV_h:=&\;
\{\vv\in L^2(\Oh)^d:\;&&
\vv|_K \in \VV(K),\;\; K\in\Oh\},
\\
\label{hdg-space-v-div}
\VVdiv:=&\;
\{\vv\in \VV_h:\;&&
\vv\in H(\mathrm{div};\Omega)\},
\\
\label{hdg-space-v-div-0}
\VVdiv(0):=&\;
\{\vv\in \VVdiv:\;&&
\mathrm{tr}_n(\vv)|_{\partial \Omega}=0
\},
\\
\label{hdg-space-q-1}
{Q_h}:=&\;
\{q\in L^2(\Oh):\;&&q|_K\in\QQ(K),\;\; K\in\Oh\},
\\
\label{hdg-space-q-0}
\mathring{Q_h}:=&\;
\{q\in Q_h:\;&&
\bint{q}{1} = 0
\},
\\
\MMh:=&\;\{\vvhat\in L^2(\Eh)^d:\;&&\vvhat|_F\in \MM(F),\;\; F\in\Eh\},
\\
\MMh(0):=&\;\{\vvhat\in \MMh:\;&&\vvhat|_F = \boldsymbol{0},\;\; F\in\Eh\},
\\
\label{hdg-space-m-t}
\Mth:=&\;\{\vvhat\in \MMh:\;&&\mathrm{tr}_n(\vvhat)|_F=0,\;\; F\in\Eh\},
\\
\label{hdg-space-m-t-0}
\Mth(0):=&\;\{\vvhat\in \Mth:\;&&\mathrm{tr}_t(\vvhat)|_{\partial \Omega}=\boldsymbol{0}\}.
\end{alignat}
\end{subequations}

\newcommand{\trn}{\mathrm{tr}_n}
\newcommand{\trt}{\mathrm{tr}_t}
\subsection{The $H(\mathrm{div})$-conforming HDG method}
Now, we are ready to present the $H(\mathrm{div})$-conforming HDG method for the Brinkman equations \eqref{nb}.

It is defined as the unique element $(\ml^h,\boldsymbol{u}^h,p^h,{\uthat}^h)\in \GG_h\times \VVdiv(0)\times \Qh\times
\Mth(0)$
such that the following weak formulation holds:
\begin{subequations}
\label{Hdiv-HDG-equations}
 \begin{alignat}{3}
 \label{Hdiv-HDG-equations-1}
 \bint{\ml^h}{\nu\,\mg^h}-\bint{\gradv \bld u^h}{\nu\,\mg^h}   + \bintEh{\trt(\bld u^h)-\uthat^h}{ \trt(\nu\,\mg^h\, \n)} & = 0, \\
 \label{Hdiv-HDG-equations-2}
 \bint{\nu\,\ml^h}{\gradv \vv^h} - \bintEh{\trt(\nu\,\ml^h\,\n)}{\trt(\vv^h)-\vthat^h}&
 \\
- \bint{p^h}{\divs \vv^h}+\bint{\gamma\, \bld u^h}{\vv^h}&=(\bld f,\vv^h)_\Oh,\nonumber \\
 \label{Hdiv-HDG-equations-3}
 \bint{\divs\bld u^h}{q^h} & = (g, q^h)_\Oh,
\end{alignat}
\end{subequations}
for all  $(\mg^h,\boldsymbol{v}^h,q^h,{\vthat}^h)\in \GG_h\times \VVdiv(0)\times \Qh\times
\Mth(0)$.
Here we write $\inp{\eta}{\zeta} := \sum_{K \in \Oh} (\eta, \zeta)_K,$
where $(\eta,\zeta)_K$ denotes the integral of $\eta\zeta$ over the domain $K \subset \mathbb{R}^n$. We also write
$\bintEh{\eta}{\zeta}:= \sum_{K \in \Oh}\langle \eta \,,\,\zeta \rangle_{\dK}$, where
$\langle \eta \,,\,\zeta \rangle_{\dK}:=\sum_{F \in \mathcal{F}(K)}  \langle \eta \,,\,\zeta \rangle_{F},$
and $\langle \eta \,,\,\zeta \rangle_{F}$ denotes the integral of $\eta \zeta$ over the facet $F \subset \mathbb{R}^{n-1}$
and where $\partial \Oh := \{ \partial K: K \subset \Oh \}$. When vector-valued or tensor-valued functions are involved, we use similar notation.

As mentioned in the Introduction, we postpone to Section \ref{sec:hybridization}
to discuss the efficient implementation of the above method via hybridization. 
Here we focus on the presentation of its (superconvergent) a priori error estimates.

\subsubsection{Discrete $H^1$-stability}
\label{subsec:dh1}
We first obtain a key result, which will be used to  prove the error estimates presented in the next subsection,
on the control of a {\em discrete $H^1$-norm} of the pair $(\bld u^h, \uthat^h)\in \VVdiv\times \Mth$ by 
the $L^2$-norm of a tensor field.

For a pair $(\bld v^h, \vthat^h)\in \VVdiv\times \Mth$, we denote
its discrete $H^1$-norm as follows:
\begin{align}
 \label{discrete-H1-norm}
 \vertiii{(\bld u^h, \uthat^h)}_{1,\Oh} :=\left(
\sum_{K\in\Oh} \|\gradv \bld u^h\|_K^2 + \sum_{F\in\Eh}h_F^{-1}\|
 \trt(\bld u^h)-\uthat^h\|_F^2
 \right)^{1/2}
\end{align}

\begin{theorem}[Discrete $H^1$-stability]
\label{thm:dh1}
Let $(\mr, \bld z^h, \zthat^h)\in L^2(\Oh)^{d\times d}\times \VVdiv\times \Mth$
satisfy the following equation
\begin{align}
\label{dh1-equation}
 \bint{\mr}{\mg^h}-\bint{\gradv \bld z^h}{\mg^h}
 + \bintEh{\trt(\bld z^h)-\zthat^h}{ \trt(\mg^h\, \n)}  = 0
\end{align}
for all $\mg^h\in \GG_h$, then we have
\begin{align}
 \label{discrete-H1-control}
 \vertiii{\left(\bld z^h, \zthat^h\right)}_{1,\Oh}\le C\,\|\mr\|_\Oh,
\end{align}
with a constant $C$ depends only on the polynomial degree $k$ and the shape-regularity of the elements
$K\in\Oh$.
\end{theorem}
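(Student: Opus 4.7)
The plan is to control both pieces of $\vertiii{(\bld z^h,\zthat^h)}_{1,\Oh}^2 = \|\gradv \bld z^h\|_\Oh^2 + \sum_{F\in\Eh} h_F^{-1}\|\trt\bld z^h-\zthat^h\|_F^2$ by testing \eqref{dh1-equation} against two judiciously chosen elements of $\GG_h$ and then closing the estimate via Young's inequality. A useful preliminary observation is that the broken gradient $\gradv\bld z^h$ itself lies in $\GG_h$: row by row, the entries of $\gradv\bld z^h|_K$ are polynomials of degree at most $k$, and $\pol_k(K)^d$ is contained in $\GG^{\mathrm{row}}(K)$ for both $\GG^{\mathrm{row}}=\pol_k^d$ (simplices) and $\GG^{\mathrm{row}}=\BDM_k$ (hypercubes).

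The first test, $\mg^h = \gradv \bld z^h$, produces
\[
 \|\gradv \bld z^h\|_\Oh^2 = \bint{\mr}{\gradv \bld z^h} + \bintEh{\trt\bld z^h-\zthat^h}{\trt(\gradv \bld z^h\,\n)},
\]
so Cauchy--Schwarz together with the discrete inverse trace inequality $h_F\|\trt(\gradv \bld z^h\,\n)\|_F^2\lesssim\|\gradv \bld z^h\|_K^2$ yields $\|\gradv \bld z^h\|_\Oh \lesssim \|\mr\|_\Oh + B$, where $B$ is the jump part of $\vertiii{\cdot}_{1,\Oh}$. For the second test let $P_M$ denote the $L^2(F)$-projection onto the tangential part of $\pol_k(F)^d$, which is precisely the range of $\trt(\mg^h\,\n)|_F$ for $\mg^h\in\GG(K)$. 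Using the face degrees of freedom of $\GG^{\mathrm{row}}$, a standard reference-element lift produces $L\in\GG_h$ satisfying
\[
 \trt(L\,\n)|_F = h_F^{-1}P_M(\trt\bld z^h-\zthat^h)|_F \quad\text{and}\quad \|L\|_\Oh^2\lesssim B_{\mathrm{low}}^2:=\sum_F h_F^{-1}\|P_M(\trt\bld z^h-\zthat^h)\|_F^2.
\]
Testing \eqref{dh1-equation} with $-L$, and using the orthogonality $\bintF{(I-P_M)\trt\bld z^h}{\trt(L\,\n)}=0$, gives the identity $B_{\mathrm{low}}^2 = \bint{\gradv \bld z^h-\mr}{L}$ and hence $B_{\mathrm{low}}\lesssim\|\gradv \bld z^h\|_\Oh + \|\mr\|_\Oh$.

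What remains is the orthogonal complement $B_{\mathrm{high}}^2:=\sum_F h_F^{-1}\|(I-P_M)\trt\bld z^h\|_F^2$. Since both $\zthat^h$ and $P_M\trt\bld z^h$ lie in $\pol_k(F)^{d-1}$, this part is driven only by the degree-$(k+1)$ component of $\trt\bld z^h$, which is tied to the $\bld x\,\widetilde\pol_k$ (or $[x\widetilde\pol_k,\ldots]^\top$ on hypercubes) coefficients of $\bld z^h|_K$; a reference-element norm-equivalence plus affine scaling then yields $h_F^{-1}\|(I-P_M)\trt\bld z^h\|_F^2\lesssim\|\gradv \bld z^h\|_K^2$, so that $B_{\mathrm{high}}\lesssim\|\gradv \bld z^h\|_\Oh$ and $B^2\lesssim B_{\mathrm{low}}^2+\|\gradv\bld z^h\|_\Oh^2$. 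Combining the two test bounds via Young absorption---or, equivalently, testing \eqref{dh1-equation} with the single weighted combination $\mg^h=\eta\,\gradv\bld z^h - L$ and choosing $\eta$ small enough to absorb the cross terms---delivers $\|\gradv\bld z^h\|_\Oh + B_{\mathrm{low}}\lesssim\|\mr\|_\Oh$, and the $B_{\mathrm{high}}$ bound closes the desired estimate $\vertiii{(\bld z^h,\zthat^h)}_{1,\Oh}\lesssim\|\mr\|_\Oh$. The main obstacle is the polynomial-degree mismatch on facets: $\trt\bld z^h$ may be of degree $k+1$ whereas $\trt(\mg^h\,\n)$ is only of degree $k$, so the full jump cannot be lifted directly into $\GG_h$. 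The $P_M$-splitting above circumvents this by catching the low mode with the boundary lift while the high mode is automatically absorbed by the broken gradient through scaling.
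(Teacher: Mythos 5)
Your outline contains the right ingredients (testing with $\gradv\bld z^h$, lifting the facet residual into $\GG_h$, and handling the degree-$(k+1)$ mismatch of $\trt(\bld z^h)$ by a Poincar\'e/scaling argument, exactly as the paper does in its last step), but the final \emph{combination} step has a genuine gap. Writing $G:=\|\gradv\bld z^h\|_{\Oh}$ and $\beta:=B_{\mathrm{low}}$, your two tests give
\begin{align*}
G\le \|\mr\|_{\Oh}+C_1\beta, \qquad \beta\le C_2 G+C_2\|\mr\|_{\Oh},
\end{align*}
where $C_1$ comes from the discrete trace inequality applied to $\trt(\gradv\bld z^h\,\n)$ and $C_2$ from the stability of the lift $L$. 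This pair of inequalities does \emph{not} imply $G+\beta\le C\|\mr\|_{\Oh}$ unless $C_1C_2<1$; e.g.\ $G=t$, $\beta=t/C_1$ satisfies both for arbitrarily large $t$ whenever $C_1C_2\ge 1$. The same obstruction reappears in the single weighted test $\mg^h=\eta\,\gradv\bld z^h-L$: absorbing the cross term $(\gradv\bld z^h,L)_{\Oh}\le C_2G\beta$ forces $\eta\gtrsim C_2^2$, while absorbing $\eta\,\bintEh{\trt(\bld z^h)-\zthat^h}{\trt(\gradv\bld z^h\,\n)}\le \eta C_1 G\beta$ forces $\eta\lesssim C_1^{-2}$, and these are compatible only under the same unverified smallness condition $C_1C_2\le 1/2$. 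Since $C_1,C_2$ are generic shape-regularity/degree constants, ``Young absorption'' cannot close the estimate.

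The missing idea is that the lift must be built so that the cross term vanishes: one needs $L\in\GG(K)$ with the prescribed tangential trace $\trt(L\,\n)$ \emph{and} with $(L,\mg)_K=0$ for all $\mg\in\gradv\VV(K)$ (more precisely, with prescribed moments against $\gradv\VV(K)\oplus\GG_{\mathrm{sbb}}(K)$ and prescribed tangential boundary trace simultaneously). Then the second test yields $\beta^2=-(\mr,L)_{\Oh}\le C_2\|\mr\|_{\Oh}\,\beta$ directly, and the first test closes the argument with no absorption. Establishing that this constrained lift exists and is stable is exactly the content of the paper's Lemma~\ref{lemma:key}: it is a square-system unisolvence statement whose dimension count relies on the M-decomposition structure of the specific pairs $(\GG,\VV,\MM)$ in Table~\ref{table-example-m}, proved in the Appendix. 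Your ``standard reference-element lift via face degrees of freedom'' glosses over precisely this point — $\pol_k(\Kbar)^d$ and $\BDM_k(\Kbar)$ rows do not come with facet degrees of freedom that are independent of the interior moments against $\gradv\VV(\Kbar)$, and verifying their compatibility is where the real work of the proof lies.
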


\subsubsection{A priori error estimates}
\label{subsec:error}
We are now ready to present the a priori error estimates for the method \eqref{Hdiv-HDG-equations}.
We compare the numerical solution against suitably chosen projections.

\subsubsection*{The projections}
In the following, we denote $P_\GG$, $P_\VV$, $\Piq$, $P_{\bld{M}^t}$ to be the $L^2$-projections onto
$\GG_h$, $\Vh$, $\Qh$, and $\Mth$ respectively.
Moreover, we set
\begin{alignat*}{3}
 \egg = \Pigg \ml - \ml^h,\;\;&  \euu = \Piww \bld u - \bld u^h, &&\;\;
 \epp = \Piq p - p^h, && \;\;\euuhat = \Pimm \bld u - \uthat^h,\\
 \dgg = \ml -\Pigg \ml, \;\;& \duu = \bld u - \Piww \bld u, &&\;\;
 \dpp = p -\Piq p, && \;\;\duuhat =\trt(\bld u) -  \Pimm \bld u.
\end{alignat*}
Here the projection $\Piww\bld u\in \Vh$ whose restriction to an element $K$ is the unique function in
$\VV(K)$ such that
\begin{subequations}
 \label{bu-projection}
\begin{alignat}{2}
 \label{bu-projection-1}
 (\Piww \bld u, \vv)_K = &\; (\bld u,\vv)_K &&\;\;\forall \;\vv\in \divv \GG(K),\\
 \label{bu-projection-2}
 \bintF{\trn(\Piww \bld u)}{\trn(\vvhat)} = &\; \bintF{\trn(\bld u)}{\trn(\vvhat)} &&\;\;\forall\; \vvhat\in \MM(F),\;\;
 \forall F\in \mathcal{F}(K).
\end{alignat}
\end{subequations}
Recall that the spaces $\VV(K)$, $\MM(F)$, and $\GG(K)$ are defined in \eqref{hdg-space}, and
\eqref{gg-space}, respectively.

When $K$ is a simplex, the above projection is nothing but the Raviar-Thomas projection,
see \cite{RaviartThomas77,Nedelec80}; when $K$ is a hypercube, the above projection is nothing but the
Brezzi-Douglas-Fortin-Marini projection, see \cite{BrezziDouglasFortinMarini87}.

The following approximation property of the above projection is well-known; see \cite[Chapter 2]{BoffiBrezziFortin13}.

\begin{lemma}
\label{lemma:projection-b}
There exists a unique function $\Piww \bld u\in\VVdiv$ defined element-wise by the equations \eqref{bu-projection}.
Moreover, there exists a constant $C$ only depending on the polynomial degree and shape-regularity of the elements $K\in\Oh$ such that
   \begin{alignat}{2}
   \label{projection-approx-1-b}
    \|\Piww \bld u - \bld u\|_\Oh\le &\;C\, \left(\|P_{\VV} \bld u - \bld u\|_\Oh
    +\sum_{K\in\Oh}h_K^{1/2}\|P_{\VV} \bld u -\bld  u\|_\dK \right).
 \end{alignat}
\end{lemma}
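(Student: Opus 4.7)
The plan is to prove the lemma in two steps: element-wise well-posedness via classical unisolvency, and the approximation estimate via a scaling argument through the Piola transformation.

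For well-posedness, I would show that on each $K$, the conditions \eqref{bu-projection-1}--\eqref{bu-projection-2} form a unisolvent set of degrees of freedom for $\VV(K)$. A dimension count matches the classical $\RT_k$ (simplex) or $\BDFM_k$ (hypercube) degrees of freedom: the boundary moments in \eqref{bu-projection-2} against $\trn(\vvhat)$ for $\vvhat\in\MM(F)$ reproduce the standard normal-trace moments on $\pol_k(F)$, while \eqref{bu-projection-1} produces the interior moments, using the inclusion $\divv \GG(K) \subseteq \VV(K)$ and the fact that $\divv$ maps $\GG(K)$ surjectively onto the standard interior moment space. Unisolvency then follows from the classical argument that any $\vv \in \VV(K)$ annihilating all these DOFs must vanish (integration by parts combined with the choice $\mg\in\GG(K)$ whose divergence spans the relevant subspace).

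For the approximation estimate, I would decompose element-wise
$$\Piww \bld u - \bld u = \bld w + (\Piv \bld u - \bld u), \qquad \bld w:= \Piww \bld u - \Piv \bld u \in \VV(K),$$
and focus on bounding $\bld w$. Using $\divv \GG(K) \subseteq \VV(K)$ together with the defining property of the $L^2$-projection $\Piv$, the identities \eqref{bu-projection-1}--\eqref{bu-projection-2} translate into
$$(\bld w, \vv)_K = (\bld u - \Piv \bld u, \vv)_K \quad\forall\, \vv\in\divv \GG(K),$$
$$\bintF{\trn(\bld w)}{\trn(\vvhat)} = \bintF{\trn(\bld u - \Piv \bld u)}{\trn(\vvhat)}\quad\forall\,\vvhat\in \MM(F),\; F\in\mathcal{F}(K).$$

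Pulling $\bld w$ back to the reference element $\Kbar$ via the Piola transformation, norm equivalence on the finite-dimensional space $\VV(\Kbar)$ bounds the pulled-back function by its DOFs, which by Cauchy--Schwarz in the two displayed identities are controlled by the reference-element norms of $\bld u - \Piv \bld u$ on $\Kbar$ and on $\partial\Kbar$. Mapping back through the Piola map and tracking powers of $h_K$ (the $1/\det\Phi_K'$ and $\Phi_K'$ factors combine to give the correct volume scaling, while the surface Jacobian produces the extra $h_K^{1/2}$ weight on the boundary contribution) yields
$$\|\bld w\|_K \le C\left(\|\bld u - \Piv \bld u\|_K + h_K^{1/2}\|\bld u - \Piv \bld u\|_{\partial K}\right).$$
The triangle inequality $\|\Piww \bld u - \bld u\|_K \le \|\bld w\|_K + \|\Piv \bld u - \bld u\|_K$ followed by summation over $K\in \Oh$ then produces \eqref{projection-approx-1-b}. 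The main technical point is the careful tracking of $h_K$ powers through the Piola transformation; once the DOF identities above are in place, the rest is a routine scaling argument that is uniform in the mesh because of shape-regularity.
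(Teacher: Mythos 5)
Your argument is correct and is precisely the classical unisolvency--plus--scaling proof that the paper itself does not reproduce: it simply identifies $\Piww$ with the Raviart--Thomas (simplex) or Brezzi--Douglas--Fortin--Marini (hypercube) projection and cites \cite[Chapter 2]{BoffiBrezziFortin13}. The only point worth making explicit is that membership in $\VVdiv$ (global $H(\mathrm{div})$-conformity of the element-wise defined function) follows because the face moments \eqref{bu-projection-2} are single-valued across interior facets and fully determine the normal trace, since $\trn(\VV(K))|_F=\trn(\MM(F))$.
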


\subsubsection*{The projection errors}
Now, we state our main results on the superconvergent error estimates.
\begin{theorem}
\label{thm:ns-error}
Let $(\ml^h,\bld u^h,p^h, \uthat^h)\in \GG_h\times \VVdiv(0)\times \Qh\times \Mth(0)$ be the numerical solution of
\eqref{Hdiv-HDG-equations},
then there exists a constant $C$, depending only on the polynomial degree $k$, the shape-regularity of the mesh $\Oh$, and the domain $\Omega$,  such that
\begin{subequations}
\begin{alignat}{2}
\label{error-u-l2-ns}
\|\euu\|_{\Oh} \le &\;C\,
\vertiii{(\euu,\euuhat)}_{1,\Oh},
\\
\label{error-u-h1-ns}
\vertiii{(\euu,\euuhat)}_{1,\Oh}
\le &\;
C\,\|\egg\|_{\Oh},\\
\label{error-q-ns}
\nu\|\egg\|_{\Oh}^2
+\|\gamma^{1/2}\,\euu\|_\Oh^2
\le &\;C\,\left(\sum_{F\in\Eh}\nu\,h_F\|\dgg\,\n\|_F^2+\|\gamma^{1/2}\,\duu\|_\Oh^2 \right).
\end{alignat}
\end{subequations}
\end{theorem}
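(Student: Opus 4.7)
The three bounds \eqref{error-u-l2-ns}--\eqref{error-q-ns} are proved in order. The first two rely on a discrete Poincar\'e inequality together with Theorem~\ref{thm:dh1}, while the third is the heart of the energy estimate. For \eqref{error-u-l2-ns}, I would establish the discrete Poincar\'e inequality $\|\bld v^h\|_\Oh \le C\,\vertiii{(\bld v^h,\vthat^h)}_{1,\Oh}$ valid for every pair $(\bld v^h,\vthat^h)\in\VVdiv(0)\times\Mth(0)$. Since $\bld v^h$ has continuous normal component across interior facets and vanishing normal trace on $\partial\Omega$, while the tangential jumps between neighbouring elements are controlled by $\trt(\bld v^h)-\vthat^h$ (with $\vthat^h$ vanishing tangentially on $\partial\Omega$), this is a routine broken-$H^1$ Poincar\'e--Friedrichs bound, obtainable via local lifting operators or a standard duality argument; applied to $(\euu,\euuhat)$ it gives \eqref{error-u-l2-ns}.

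For \eqref{error-u-h1-ns}, I would derive the error equation from \eqref{Hdiv-HDG-equations-1} by consistency (the exact solution with $\uthat=\trt(\bld u)$ satisfies \eqref{Hdiv-HDG-equations-1} trivially). Substituting the projection splittings $\ml-\ml^h=\dgg+\egg$, $\bld u-\bld u^h=\duu+\euu$, and $\trt(\bld u)-\uthat^h=\duuhat+\euuhat$, and showing that all projection-error terms on the right-hand side vanish, one arrives at
\[
\bint{\egg}{\mg^h}-\bint{\gradv\euu}{\mg^h}+\bintEh{\trt(\euu)-\euuhat}{\trt(\mg^h\n)}=0 \qquad \forall\,\mg^h\in\GG_h.
\]
The cancellations use: $L^2$-orthogonality of $\Pigg$ on $\GG_h$; integration by parts combined with the interior moment condition \eqref{bu-projection-1}, since $\divv\mg^h|_K\in\divv\GG(K)$; the facet moment condition \eqref{bu-projection-2}, since $\trn(\mg^h\n)|_F$ is admissible in $\trn(\MM(F))$; and $L^2$-orthogonality of $\Pimm$ on $\Mth$ killing the $\duuhat$ contribution. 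Applying Theorem~\ref{thm:dh1} with $\mr=\egg$ then yields \eqref{error-u-h1-ns}, and chaining with the previous step gives $\|\euu\|_\Oh\le C\|\egg\|_\Oh$.

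For \eqref{error-q-ns}, the core energy estimate, I would test the error form of \eqref{Hdiv-HDG-equations-2} with $\vv^h=\euu$ and $\vthat^h=\euuhat$, and exploit three cancellations. First, $\divs\euu\equiv 0$: this follows from the commuting property $\divs(\Piww\bld u)=\Piq(\divs\bld u)$ combined with the discrete mass equation \eqref{Hdiv-HDG-equations-3}, so both pressure-error contributions (including the projection error $\dpp$) drop. Second, the error equation from the previous step tested with $\mg^h=\nu\egg$ gives
\[
\bint{\nu\egg}{\gradv\euu}-\bintEh{\trt(\nu\egg\n)}{\trt(\euu)-\euuhat}=\nu\|\egg\|_\Oh^2,
\]
so the mixed viscous terms combine into $\nu\|\egg\|_\Oh^2$ on the left-hand side. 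Third, $\bint{\dgg}{\gradv\euu}=0$ by $L^2$-orthogonality, using the containment $\gradv\VV(K)\subseteq\GG(K)$, valid for both the simplicial ($\RT_k$/$\pol_k$) and hypercube ($\BDFM_k$/$\BDM_k$) choices in Table~\ref{table-example-m}. What remains is the clean identity
\[
\nu\|\egg\|_\Oh^2+\|\gamma^{1/2}\euu\|_\Oh^2=\bintEh{\trt(\nu\dgg\n)}{\trt(\euu)-\euuhat}-\bint{\gamma\duu}{\euu},
\]
which I would bound by Cauchy--Schwarz and Young's inequality, absorbing $\sum_{F\in\Eh}h_F^{-1}\|\trt(\euu)-\euuhat\|_F^2\le C\|\egg\|_\Oh^2$ via \eqref{error-u-h1-ns}. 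The main obstacle throughout is the elimination of projection-error terms in the proof of \eqref{error-u-h1-ns}: it hinges on precise degree compatibility between $\GG(K)$, $\VV(K)$, $Q(K)$, and $\MM(F)$, and confirming this alignment for both families in Table~\ref{table-example-m} is where most of the technical bookkeeping sits; once done, the remaining energy argument is essentially routine.
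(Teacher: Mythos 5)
Your plan is correct and follows essentially the same route as the paper: a broken Poincar\'e inequality (the paper cites Di Pietro--Droniou--Ern) plus the fact that the jump of $\euu$ is purely tangential for \eqref{error-u-l2-ns}, the first-equation error identity fed into Theorem~\ref{thm:dh1} for \eqref{error-u-h1-ns}, and the symmetric energy test yielding $\nu\|\egg\|_\Oh^2+\|\gamma^{1/2}\euu\|_\Oh^2=\pm\bigl(\bintEh{\trt(\nu\,\dgg\,\n)}{\trt(\euu)-\euuhat}-\bint{\gamma\,\duu}{\euu}\bigr)$ absorbed via \eqref{error-u-h1-ns} for \eqref{error-q-ns}. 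The only cosmetic difference is that the paper tests the combined form $B_h$ with $(\egg,\euu,\epp,\euuhat)$ so the pressure terms cancel by antisymmetry, whereas you invoke $\divs\euu=0$ separately; both work.
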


Combing this result with Lemma \ref{lemma:projection-b}, we
immediately obtain optimal convergence of $L^2$-error for $\ml^h$ and $\bld u^h$, and
superconvergent discrete $H^1$-error for the pair $(\bld u^h,\uthat^h)$ comparing with the projection
$(\Piww \bld u, \Pimm \bld u)$; see the following corollary.
We omit the proof due to its simplicity. We specifically remark that the errors below are independent of the regularity of the pressure.

\begin{corollary}
\label{coro:ns-error}
Let $(\ml^h,\bld u^h,p^h, \uthat^h)\in \GG_h\times \VVdiv(0)\times \Qh\times \Mth(0)$ be the numerical solution of
\eqref{Hdiv-HDG-equations},
then there exists a constant $C$, depending only on the polynomial degree $k$, the shape-regularity of the mesh $\Oh$, and the domain $\Omega$,  such that
\begin{align*}
 \nu^{1/2}\left(\|\egg\|_{\Oh}
 +\vertiii{(\euu,\euuhat)}_{1,\Oh}\right)
 +
 \max\{ \nu^{1/2}\|\euu\|_\Oh, \|\gamma^{1/2}\,\euu\|_\Oh\} \le &\;C\,\Theta\, h^{k+1}
\end{align*}
where
\[
 \Theta:=
  \nu^{1/2}\,\|\ml\|_{k+1,\Omega}+
  \gamma_{\max}^{1/2}\,\|\bld u\|_{k+1,\Omega},
\]
and $\gamma_{\max}$ is the maximum eigenvalue of the inverse permeability tensor $\gamma$,
and $\|\cdot\|_{m,\Omega}$ denotes the $H^m$-norm on $\Omega$.
\end{corollary}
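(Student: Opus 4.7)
The plan is to combine the three estimates of Theorem~\ref{thm:ns-error} with standard approximation properties of the projections $P_\GG$ and $\Piww$. Inequality \eqref{error-q-ns} is the ``source'' bound: once its right-hand side is controlled at the optimal rate $h^{k+1}$, estimates \eqref{error-u-h1-ns} and \eqref{error-u-l2-ns} mechanically propagate that rate to $\vertiii{(\euu,\euuhat)}_{1,\Oh}$ and $\|\euu\|_\Oh$, respectively. This is why the author can announce the corollary as an immediate consequence.

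First I would estimate the trace term $\sum_F h_F\|\dgg\,\n\|_F^2$. Because $P_\GG$ is the componentwise $L^2$-projection onto a space whose reference image contains $\pol_k(\Kbar)^d$ on every row (Table~\ref{table-example-m}), a standard scaled trace inequality on each element $K$ together with the Bramble--Hilbert lemma yields $\|\dgg\|_{\partial K}^2 \le C\,h_K^{2k+1}\|\ml\|_{k+1,K}^2$, so that
\[
  \sum_{F\in\Eh} h_F\|\dgg\,\n\|_F^2 \le C\,h^{2(k+1)}\|\ml\|_{k+1,\Omega}^2.
\]
Multiplying by $\nu$ delivers the $\nu^{1/2}\|\ml\|_{k+1,\Omega}$ contribution to $\Theta$.

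Next I would estimate $\|\duu\|_\Oh$. Lemma~\ref{lemma:projection-b} reduces this to bounding $\|P_\VV\bld u-\bld u\|_\Oh$ and its scaled facet version $h_K^{1/2}\|P_\VV\bld u-\bld u\|_{\partial K}$; both are $O(h^{k+1}\|\bld u\|_{k+1,\Omega})$ by the same Bramble--Hilbert argument, since the relevant reference spaces in Table~\ref{table-example-m} again contain $\pol_k(\Kbar)^d$. Multiplying by $\gamma_{\max}^{1/2}$ delivers the $\gamma_{\max}^{1/2}\|\bld u\|_{k+1,\Omega}$ contribution to $\Theta$.

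Finally, substituting these bounds into \eqref{error-q-ns} gives $\nu^{1/2}\|\egg\|_\Oh + \|\gamma^{1/2}\euu\|_\Oh \le C\,\Theta\,h^{k+1}$. A single application of \eqref{error-u-h1-ns} upgrades this to $\nu^{1/2}\vertiii{(\euu,\euuhat)}_{1,\Oh}\le C\,\Theta\,h^{k+1}$, and a further application of \eqref{error-u-l2-ns} gives $\nu^{1/2}\|\euu\|_\Oh\le C\,\Theta\,h^{k+1}$; together with the already-established bound on $\|\gamma^{1/2}\euu\|_\Oh$, this controls the maximum on the left-hand side. No step is a genuine obstacle; the one thing worth underlining is that the ``source'' inequality \eqref{error-q-ns} is already pressure-free, so $\Theta$ involves only $\ml$ and $\bld u$ and the estimate is automatically robust in the ratio $\nu/\gamma_{\max}$, covering both the Stokes-dominated and Darcy-dominated regimes in a single stroke.
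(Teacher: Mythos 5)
Your proposal is correct and is exactly the argument the paper intends: the authors omit the proof, stating only that the corollary follows by combining Theorem~\ref{thm:ns-error} with Lemma~\ref{lemma:projection-b}, and your chain (approximation bounds for the right-hand side of \eqref{error-q-ns}, then propagation through \eqref{error-u-h1-ns} and \eqref{error-u-l2-ns}) is the standard way to fill in that gap. No discrepancies.
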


Next, we obtain optimal $L^2$-estimates for pressure for $k\ge 0$ and superconvergent
$L^2$-estimates for the projection error $\euu$ for $k\ge 1$ (with a $H^2$-regularity assumption for the dual problem).

{We assume that the following regularity estimate} holds
\begin{align}
\label{dual_prob_assum}
\|\Phi\|_{1,\Omega}+\|\boldsymbol{\phi}\|_{2,\Omega}+\|\varphi\|_{1,\Omega}\le C_r\|\boldsymbol{\theta}\|_{\Omega}
\end{align}
for the dual problem
\begin{subequations}
\label{dual_prob}
\begin{align}
\label{dual_prob_a}
\Phi-\gradv\boldsymbol{\phi}=0&\;\qquad \text{in}\quad\Omega,\\
\label{dual_prob_b}
-\nu\divv \Phi+\gamma\boldsymbol{\phi}-\nabla \varphi=\boldsymbol{\theta}&\;\qquad\text{in}\quad\Omega,\\
\label{dual_prob_c}
\divv\boldsymbol{\phi}=0&\;\qquad \text{in}\quad \Omega,\\
\label{dual_prob_d}
\boldsymbol{\phi}=0&\;\qquad \text{on}\quad \partial\Omega.
\end{align}
\end{subequations}
{
We notice that it is easy to see the dual problem (\ref{dual_prob}) is well-posed with 
$\Vert \boldsymbol{\phi} \Vert_{1,\Omega} \leq C \Vert \boldsymbol{\theta}\Vert_{\Omega}$. 
Obviously, $(\Phi, \boldsymbol{\phi}, \varphi)$ is the solution of the Stokes problem with 
the source term $\boldsymbol{\theta} - \gamma \boldsymbol{\phi}$. So, the regularity estimate (\ref{dual_prob_assum}) 
comes from that of the Stoke problem (see \cite{GiraultRaviart86}).
}

\begin{theorem}
\label{thm:ns-superror}
Let $(\ml^h,\bld u^h,p^h, \uthat^h)\in \GG_h\times \VVdiv(0)\times \Qh\times \Mth(0)$ be the numerical solution of
\eqref{Hdiv-HDG-equations},
then there exists a constant $C$, depending only on the polynomial degree $k$, the shape-regularity of the mesh $\Oh$, and the domain $\Omega$,  such that
\begin{align}\label{error_p}
\|\epp\|_{\Oh}\le C(\nu^{1/2}+\gamma_{\max}^{1/2})\,\Theta\,h^{k+1},
\end{align}
here $\gamma_{\max}$ and $\Theta$ are defined in Corollary \ref{coro:ns-error}.

In addition, if $k\ge 1$, the regularity assumption \eqref{dual_prob_assum} holds and
$\gamma\in W^{1,\infty}(\Omega)^{d\times d}$, then we have
\begin{align}\label{superror_u}
\|\euu\|_{\Oh}\le C\,C_r\,\left((\nu^{1/2}+\gamma_{\max}^{1/2})\,\Theta+
\|\gamma\|_{1,\infty}\|\bld u\|_{k+1}\right)h^{k+2}.
\end{align}

\end{theorem}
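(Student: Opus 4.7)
The plan is to establish (\ref{error_p}) via a discrete inf-sup argument based on the $H(\mathrm{div})$-conforming structure, and (\ref{superror_u}) via an Aubin--Nitsche duality argument using the dual problem (\ref{dual_prob}). For the pressure bound, I would first derive the error equation by subtracting (\ref{Hdiv-HDG-equations-2}) from the weak form satisfied by the exact solution; after inserting and removing the projections $\Pigg,\Piww,\Piq,\Pimm$, this takes the form
\begin{align*}
(\nu\,\egg,\gradv\vv^h)_\Oh - \bintEh{\trt(\nu\,\egg\,\n)}{\trt(\vv^h)-\vthat^h} - (\epp,\divs\vv^h)_\Oh + (\gamma\,\euu,\vv^h)_\Oh = R(\vv^h,\vthat^h)
\end{align*}
for all $(\vv^h,\vthat^h)\in\VVdiv(0)\times\Mth(0)$, with residual $R$ controlled by $\|\dgg\|_\Oh$ and $\|\gamma^{1/2}\,\duu\|_\Oh$. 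The commuting diagram $\divs\Piww\bld w=P_Q(\divs\bld w)$ together with the continuous inf-sup condition for $(H_0^1(\Omega)^d,L_0^2(\Omega))$ furnishes a Fortin-type lift $\vv^h:=\Piww\bld w\in\VVdiv(0)$ with $\divs\vv^h=\epp$ and $\|\vv^h\|_\Oh+\vertiii{(\vv^h,\Pimm\bld w)}_{1,\Oh}\le C\|\epp\|_\Oh$. Testing the error equation against $(\vv^h,\Pimm\bld w)$ and applying Cauchy--Schwarz together with Theorem \ref{thm:ns-error} and Corollary \ref{coro:ns-error} then delivers (\ref{error_p}).

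For the superconvergence bound, I would let $(\Phi,\boldsymbol{\phi},\varphi)$ solve the dual problem (\ref{dual_prob}) with data $\boldsymbol{\theta}:=\euu$, so that (\ref{dual_prob_assum}) yields $\|\Phi\|_{1,\Omega}+\|\boldsymbol{\phi}\|_{2,\Omega}+\|\varphi\|_{1,\Omega}\le C_r\|\euu\|_\Oh$. Substituting (\ref{dual_prob_b}) gives
\begin{align*}
\|\euu\|_\Oh^2 = (\euu,-\nu\divv\Phi+\gamma\boldsymbol{\phi}-\grads\varphi)_\Oh.
\end{align*}
Integrating by parts element-wise, inserting the projections $(\Pigg\Phi,\Piww\boldsymbol{\phi},\Piq\varphi,\Pimm\boldsymbol{\phi})$, and invoking the adjoint form of the error equation, I would recast $\|\euu\|_\Oh^2$ as a sum of terms of the type $(\egg,\Phi-\Pigg\Phi)_\Oh$, $(\gamma\,\euu,\boldsymbol{\phi}-\Piww\boldsymbol{\phi})_\Oh$, $(\epp,\varphi-\Piq\varphi)_\Oh$, together with facet residuals involving $\trt(\boldsymbol{\phi})-\Pimm\boldsymbol{\phi}$. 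Standard approximation estimates give $\|\Phi-\Pigg\Phi\|_\Oh\le Ch\|\Phi\|_{1,\Omega}$, $\|\boldsymbol{\phi}-\Piww\boldsymbol{\phi}\|_\Oh\le Ch^2\|\boldsymbol{\phi}\|_{2,\Omega}$, and analogous bounds for the other two projections; combined with Theorem \ref{thm:ns-error} and the pressure bound just established, each contribution is of order $h^{k+2}\,C_r(\nu^{1/2}+\gamma_{\max}^{1/2})\,\Theta\,\|\euu\|_\Oh$.

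The main obstacle is the term $(\gamma\,\euu,\boldsymbol{\phi}-\Piww\boldsymbol{\phi})_\Oh$: because $\gamma$ is variable, $\gamma\,\euu\notin\VV_h$ in general and one cannot directly exploit $L^2$-orthogonality to save a full power of $h$. To handle it I would split, on each $K\in\Oh$, $\gamma|_K=\overline{\gamma}_K+(\gamma|_K-\overline{\gamma}_K)$ with $\overline{\gamma}_K$ the cell-average; the oscillatory part contributes a factor $h\|\gamma\|_{1,\infty}$ via a Poincar\'e-type bound, while the piecewise-constant part is absorbed by orthogonality against $\VV_h$ after noting that $\overline{\gamma}_K\,\euu|_K\in\VV(K)$. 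The very same splitting applied to the companion term $(\gamma\,\duu,\Piww\boldsymbol{\phi})_\Oh$ that arises from writing $\euu=(\Piww\bld u-\bld u)+(\bld u-\bld u^h)$ produces the residual $\|\gamma\|_{1,\infty}\|\bld u\|_{k+1}\,h^{k+2}$ appearing in (\ref{superror_u}). Dividing through by $\|\euu\|_\Oh$ concludes the argument.
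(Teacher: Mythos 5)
Your overall architecture coincides with the paper's: the pressure bound via the continuous inf-sup condition \eqref{inf-sup} tested with $(\bld 0,\Piww\bld\omega,0,\Pimm\bld\omega)$ in the error equation, and the velocity bound via duality with \eqref{dual_prob} and a cell-average splitting of $\gamma$. Two points in the duality part need correction, however. First, you misplace the difficulty: the term $\bint{\gamma\,\euu}{\bld\phi-\Piww\bld\phi}$ is benign, since both factors are already small --- Cauchy--Schwarz with $\|\gamma^{1/2}\euu\|_\Oh\le C\,\Theta\,h^{k+1}$ and $\|\bld\phi-\Piww\bld\phi\|_\Oh\le C\,h\,\|\bld\phi\|_{1,\Omega}$ gives $h^{k+2}$ with no orthogonality at all (this is the paper's $T_4$). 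The genuinely delicate term is the companion one, $\bint{\gamma\,\duu}{\Piww\bld\phi}$, where $\Piww\bld\phi$ is $O(1)$ and a direct bound yields only $h^{k+1}$.

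Second, and more substantively, the orthogonality you invoke does not exist as stated. $\Piww$ is the Raviart--Thomas/BDFM projection, not the $L^2$-projection onto $\VV_h$; its volume moments \eqref{bu-projection-1} give only $\duu\perp\divv\GG(K)\supseteq\pol_{k-1}(K)^d$, so neither $\bld\phi-\Piww\bld\phi$ nor $\duu$ is $L^2$-orthogonal to all of $\VV(K)$, and the step ``the piecewise-constant part is absorbed by orthogonality against $\VV_h$ after noting $\overline{\gamma}_K\,\euu|_K\in\VV(K)$'' fails (moreover $\RT_k(\Kbar)$ is not invariant under multiplication by a constant matrix, so even that membership is doubtful). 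The correct mechanism --- the one the paper uses for its $T_5$ --- is that for $k\ge1$ the constant vectors lie in $\divv\GG(K)$, hence $\bintKK{P_{0,h}\gamma\,\duu}{\bar{\bld\phi}}=0$ with $\bar{\bld\phi}$ the cell average of $\bld\phi$; one then estimates $\bint{P_{0,h}\gamma\,\duu}{\Piww\bld\phi-\bar{\bld\phi}}$ and gains the extra power of $h$ from $\|\Piww(\bld\phi-\bar{\bld\phi})\|_\Oh\le C\,h\,\|\gradv\bld\phi\|_\Oh$, while the oscillatory part $\gamma-P_{0,h}\gamma$ contributes the factor $h\,\|\gamma\|_{1,\infty}$ exactly as you describe. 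This is also precisely where the hypothesis $k\ge1$ enters, which your sketch never uses. With this repair the argument closes as in the paper.
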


\section{Proof of Theorem \ref{thm:dh1}, Theorem \ref{thm:ns-error} and Theorem \ref{thm:ns-superror}}
\label{sec:proof}
In this section, we prove the main results in Section \ref{sec:main}, namely, Theorem \ref{thm:dh1}, Theorem \ref{thm:ns-error} and Theorem \ref{thm:ns-superror}.

The following result is a key ingredient to prove Theorem \ref{thm:dh1}. We postpone its proof to Appendix.
\begin{lemma}
\label{lemma:key}
 Given $(\bld z^h, \zzhat^h)\in \VV(K)\times \MM(\dK)$ where
 \[
  \MM(\dK):=\{\vvhat\in L^2(\dK)^d:\;\vvhat|_F\in \MM(F)\;\;\forall F\in\mathcal{F}(K)\},
 \]
there exists a unique function $\mr^h\in \GG(K)$ such that
\begin{subequations}
\label{g-proj}
\begin{alignat}{2}
\label{g-proj-1}
 ({\mr^h},{\mg^h})_K = &\; ({\gradv \bld z^h},{\mg^h})_K&&\;\;\forall \mg^h\in \gradv\VV(K)\oplus \GG_{\mathrm{sbb}}(K),\\
\label{g-proj-2}
 \bintK{\trt(\mr^h\,\n)}{\trt(\vvhat)} = &\; \bintK{\trt(\zzhat^h)}{\trt(\vvhat)}&&\;\;\forall \vvhat^h\in \MM(\dK),
\end{alignat}
\end{subequations}
where
\[
 \GG_{\mathrm{sbb}}(K):=\{\mg\in \GG(K):\;\;\divv \mg = 0,\;\;\trn^F(\mg\,\n)=0 \;\forall F\in\mathcal{F}(K)\}.
\]
Moreover, there exists a constant $C$ only depending on the shape-regularity of the element $K$ such that
\begin{align}
 \label{estimate-1}
 \|\mr^h\|_K\le C\left(\|\gradv \bld z^h\|_K^2 + \sum_{F\in\mathcal{F}(K)}h_F\|\trt(\zzhat^h)\|_F^2\right)^{1/2}
\end{align}
\end{lemma}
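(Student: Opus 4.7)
The plan is to recast Lemma~\ref{lemma:key} as a local unisolvence statement on $K$ together with a reference-element scaling for the bound \eqref{estimate-1}. Since $\GG(K)$ is finite-dimensional, existence of $\mr^h$ and continuous dependence on the data both follow once the linear map sending $\mr^h\in\GG(K)$ to the moments prescribed in \eqref{g-proj-1}--\eqref{g-proj-2} is shown to be bijective; by the Piola pull-back to $\Kbar$ one may work with a fixed reference constant.

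The first ingredient is a dimension count. The tangential boundary conditions \eqref{g-proj-2} impose $(d-1)\sum_{F\in\mathcal{F}(K)}\dim\pol_k(F)$ linear constraints. Treating the simplex case ($\GG^{\mathrm{row}}=\pol_k^d$, $\VV=\RT_k$) and the hypercube case ($\GG^{\mathrm{row}}=\BDM_k$, $\VV=\BDFM_k$) separately and using the known dimensions of these spaces, one verifies that this count plus $\dim(\gradv\VV(K)\oplus\GG_{\mathrm{sbb}}(K))$ equals $\dim\GG(K)$. It then suffices to prove injectivity.

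For injectivity, assume $\mr^h\in\GG(K)$ satisfies \eqref{g-proj-1}--\eqref{g-proj-2} with vanishing right-hand sides. Since the tangential part of $\MM(F)$ contains $\pol_k(F)^{d-1}$, testing \eqref{g-proj-2} against all of $\MM(\dK)$ forces $\trt(\mr^h\n)=0$ on $\partial K$, so $\mr^h\n|_F=\alpha_F\,\n$ for some $\alpha_F\in\pol_k(F)$. Taking $\mg^h=\gradv\vv$ with $\vv\in\VV(K)$ in \eqref{g-proj-1} and integrating by parts gives
\begin{align*}
0=(\mr^h,\gradv\vv)_K=-(\divv\mr^h,\vv)_K+\sum_{F\in\mathcal{F}(K)}\int_F \alpha_F(\vv\cdot\n).
\end{align*}
Using the standard $\RT_k$ (respectively $\BDFM_k$) degrees of freedom I would first pick $\vv\in\VV(K)$ with $\vv\cdot\n|_F=\alpha_F$ on each face and vanishing interior moments against $\pol_{k-1}^d$; since $\divv\mr^h\in\pol_{k-1}^d$, the volume term drops and the identity collapses to $\sum_F\|\alpha_F\|_F^2=0$, hence $\trn(\mr^h\n)=0$. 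The boundary term then vanishes for every $\vv\in\VV(K)$, so testing with $\vv=\divv\mr^h\in\pol_{k-1}^d\subseteq\VV(K)$ yields $\divv\mr^h=0$. Together with $\mr^h\n=0$ on $\partial K$ this places $\mr^h\in\GG_{\mathrm{sbb}}(K)$, whereupon testing \eqref{g-proj-1} with $\mg^h=\mr^h$ itself gives $\|\mr^h\|_K^2=0$.

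Once unisolvence is established on $\Kbar$, the inverse of the moment map is a continuous linear operator whose norm depends only on $k$ and the shape-regularity. Pulling back via the affine/Piola transformation produces \eqref{estimate-1}, the volume contribution scaling as $\|\gradv\bld z^h\|_K$ and the face moments of $\trt(\zzhat^h)$ picking up exactly the $h_F^{1/2}$ factor appearing under the square root. The main obstacle I expect is the combinatorial side: the dimension count, and the verification that $\divv\GG^{\mathrm{row}}(K)\subseteq\VV(K)$ together with matching normal-trace spaces on each face. These compatibility relations between the $\RT_k/\BDM_k/\BDFM_k$ families are precisely what allow the chain of test-function choices in the injectivity argument to close up in both the simplicial and the hypercubic setting.
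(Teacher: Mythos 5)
Your proposal follows essentially the same route as the paper's Appendix proof: reduce to the reference element, show that \eqref{g-proj} is a square system by a dimension count, prove injectivity by first killing the tangential trace via \eqref{g-proj-2}, then testing \eqref{g-proj-1} with $\mg^h=\gradv\vv$ for $\vv\in\VV(K)$ chosen through the classical $\RT_k$/$\BDFM_k$ degrees of freedom to force $\trn(\mr^h\,\n)=0$ and $\divv\mr^h=0$, and finally taking $\mg^h=\mr^h\in\GG_{\mathrm{sbb}}(K)$; the estimate \eqref{estimate-1} then follows by scaling and equivalence of norms on finite-dimensional spaces. The only substantive difference is that the paper derives the dimension identity from the M-decomposition theory of Cockburn--Fu--Sayas (which in particular supplies $\dim\GG_{\mathrm{sbb}}(K)$ without computing it by hand) rather than by the direct case-by-case count you propose, which is exactly the combinatorial verification you flag as the remaining obstacle.
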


Now, we are ready to prove Theorem \ref{thm:dh1}.
\subsection*{Proof of Theorem \ref{thm:dh1}}
\begin{proof}
By Lemma \ref{lemma:key}, for any $\bld z^h\in \VV(K)$ and $\zthat^h\in \{\vvhat\in\MM(\dK):\; \trn(\vvhat)=0\}$,
there exists $\mg^h\in \GG(K)$  such that
\begin{align*}
 (\gradv \bld z^h, \mg^h)_K - \bintK{\trt(\bld z^h)-\zthat^h}{\trt(\mg^h\,\n)}
 =&\;\|\gradv \bld z^h\|_K^2\\
 &\;+\sum_{F\in\mathcal{F}(K)}h_F^{-1}\|\Pimm(\trt(\bld z^h))-\zthat^h\|_F^2
\end{align*}
and $\|\mg^h\|_K\le C\, (\|\gradv \bld z^h\|_K^2+\sum_{F\in\mathcal{F}(K)}h_F^{-1}\|\Pimm(\trt(\bld z^h))-\zthat^h\|_F^2)^{1/2}$.
Taking such $\mg^h$ in \eqref{dh1-equation}, we get
\begin{align*}
\|\gradv \bld z^h\|_K^2+&\;\sum_{F\in\mathcal{F}(K)}h_F^{-1}\|\Pimm(\trt(\bld z^h))-\zthat^h\|_F^2
 =(\mr, \mg^h)_K\\
 \le&\; C\, \left(\|\gradv \bld z^h\|_K^2+\sum_{F\in\mathcal{F}(K)}h_F^{-1}\|\Pimm(\trt(\bld z^h))-\zthat^h\|_F^2\right)^{1/2}\,\|\mr\|_K.
\end{align*}
Hence,
\begin{align}
\label{key-2}
 \left(\|\gradv \bld z^h\|_K^2+\sum_{F\in\mathcal{F}(K)}h_F^{-1}\|\Pimm(\trt(\bld z^h))-\zthat^h\|_F^2\right)^{1/2}\le C\,\|\mr\|_K.
\end{align}

Moreover, on each facet $F\in\mathcal{F}(K)$, we have
\[
\| \trt(\bld z^h) -\Pimm(\trt(\bld z^h))\|_{F}
= \| \bld z^h -P_{\bld M}(\bld z^h)\|_{F}
\le \,\| \bld z^h -\overline{\bld z^h}\,\|_{F}
\le C\,h_K^{1/2}\| \gradv \bld z^h\,\|_{K},
\]
where $\overline{\bld z^h}$ is the average of $\bld z^h$ in the element $K$ and the last inequality is the
Poinc\'are inequality.
Combining the above result with \eqref{key-2}, we obtain
\[
  \vertiii{\left(\bld z^h, \zthat^h\right)}_{1,K}\le C\,\|\mr\|_K.
\]
The proof of Theorem \ref{thm:dh1} is completed by summing the above estimate over all the elements $K\in\Oh$.
\end{proof}

We use the following error equation to prove Theorem \ref{thm:ns-error}.
To simplify notation, we denote
\begin{align}
\label{bilinear}
 {B}_h(\ml, \bld u, p,\uthat; \mg, \bld v, q,\vthat):=&\;
 \bint{\ml}{\nu\,\mg}-\bint{\gradv \bld u}{\nu\,\mg} \\
&\; + \bintEh{\trt(\bld u)-\uthat}{ \trt(\nu\,\mg\, \n)}  \nonumber\\
&\; + \bint{\nu\,\ml}{\gradv \vv} - \bintEh{\trt(\nu\,\ml\,\n)}{\trt(\vv)-\vthat}\nonumber\\
&\;- \bint{p}{\divs \vv}+\bint{\gamma\, \bld u}{\vv}\nonumber\\
&\;
+ \bint{\divs\bld u}{q}.\nonumber
\end{align}

\begin{lemma}
 Let $(\ml, \bld u, p)$ be the solution to \eqref{nb}, and
 $(\ml^h, \bld u^h, p^h,\uthat^h)$ be the numerical solution to \eqref{Hdiv-HDG-equations}.
 Then, we have
 \begin{align}
\label{error-equation}
 {B}_h(\egg, \euu, \epp,\euuhat; \mg^h, \bld v^h, q^h,\vthat^h)
 =&\; \bintEh{\trt(\nu\,\dgg\,\n)}{\trt(\vv^h)-\vthat^h}\\
& \;-\bint{\gamma\,\duu}{\vv^h}.\nonumber
 \end{align}
for all  $(\mg^h,\boldsymbol{v}^h,q^h,{\vthat}^h)\in \GG_h\times \VVdiv(0)\times \Qh\times
\Mth(0)$.
\end{lemma}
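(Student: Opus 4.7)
The plan is to derive the error equation in three stages: first establish Galerkin consistency for the exact solution, then subtract the discrete equation to obtain Galerkin orthogonality, and finally use the orthogonality properties of $\Pigg$, $\Piww$, $\Piq$, $\Pimm$ to collapse the projection-error contribution into the two skeleton/volume terms on the right-hand side.

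For consistency, I would verify that
$$B_h(\ml, \bld u, p, \trt(\bld u); \mg^h, \vv^h, q^h, \vthat^h) = (\bld f, \vv^h)_\Oh + (g, q^h)_\Oh.$$
The first row vanishes since $\ml=\gradv \bld u$ and the skeleton term uses $\trt(\bld u)$. For the second row, I integrate by parts on $(\nu\ml,\gradv\vv^h)_\Oh$ and on $-(p,\divs\vv^h)_\Oh$, substitute the momentum equation \eqref{nbb}, and observe that the jump contributions $\langle p,\vv^h\cdot\n\rangle_{\partial\Oh}$ and the normal piece of $\langle \nu\ml\n,\vv^h\rangle_{\partial\Oh}$ vanish by $H(\divs)$-conformity of $\vv^h$ combined with the boundary condition $\vv^h\cdot\n=0$ on $\partial\Omega$, while the leftover tangential piece $\langle \trt(\nu\ml\n),\vthat^h\rangle_{\partial\Oh}$ is killed by single-valuedness of the traction on interior facets (with opposite orientations) and by $\vthat^h|_{\partial\Omega}=\bld 0$. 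The third row uses \eqref{nbc}. Subtracting \eqref{Hdiv-HDG-equations} and decomposing $\ml-\ml^h=\dgg+\egg$, etc., reduces the claim to
$$B_h(\dgg,\duu,\dpp,\duuhat;\mg^h,\vv^h,q^h,\vthat^h)= -\bintEh{\trt(\nu\,\dgg\,\n)}{\trt(\vv^h)-\vthat^h}+\bint{\gamma\,\duu}{\vv^h}.$$

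This identity is proved term-by-term using the defining orthogonalities of the projections. The term $\bint{\dgg}{\nu\mg^h}$ vanishes since $\Pigg$ is the $L^2$-projection onto $\GG_h$. Similarly $\bint{\nu\dgg}{\gradv\vv^h}=0$ because $\gradv\vv^h\in\GG_h$ elementwise (for the simplex $\RT_k$ has components in $\pol_{k+1}$ with gradient in $\pol_k^{d\times d}=\GG(K)$; for the hypercube, $\BDFM_k$ components lie in $\pol_{k+1}$ with gradient in $\pol_k^d\subset\BDM_k$). Pressure: $\bint{\dpp}{\divs\vv^h}=0$ since $\divs\vv^h\in Q_h$; and $\bint{\divs\duu}{q^h}=0$ by the commuting-diagram property $\divs\Piww=\Piq\,\divs$. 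The only substantive calculation is
$-\bint{\gradv\duu}{\nu\mg^h}+\bintEh{\trt(\duu)-\duuhat}{\trt(\nu\mg^h\n)}$: integrating by parts elementwise kills the volume piece $(\duu,\divv(\nu\mg^h))_K$ via \eqref{bu-projection-1} (since $\divv(\nu\mg^h)\in\divv\GG(K)$), and decomposing the boundary residual $\langle\duu,\nu\mg^h\n\rangle_{\partial K}$ into tangential and normal parts annihilates the normal piece by \eqref{bu-projection-2}. What remains is $-\bintEh{\duuhat}{\trt(\nu\mg^h\n)}$, which is zero because $\duuhat=\trt(\bld u)-\Pimm\bld u$ is $L^2(F)$-orthogonal to the tangential part of $\MM(F)$, and $\trt(\nu\mg^h\n)|_F$ lies in that space.

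The main obstacle is the bookkeeping across four different projections and the tangential/normal trace split: one must carefully match $\Pimm$ (tangential only), $\Piww$ (normal-trace preserving RT/BDFM), $\Pigg$ (plain $L^2$ onto rowwise Piola), and $\Piq$, so that each unwanted residual is absorbed by a projection orthogonality rather than left dangling. I would therefore organize the proof by first cancelling all volume contributions and then tracking the skeleton pieces facet-by-facet, using the decomposition $\nu\mg^h\n=\trt(\nu\mg^h\n)+\trn(\nu\mg^h\n)$ and the analogous split of $\duu$ so that each boundary pairing matches exactly one defining property of $\Piww$ or $\Pimm$.
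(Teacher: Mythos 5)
Your proposal is correct and follows essentially the same route as the paper: establish consistency of the exact solution with the scheme, subtract to get $B_h(\egg,\euu,\epp,\euuhat;\cdot)=-B_h(\dgg,\duu,\dpp,\duuhat;\cdot)$, and then collapse the latter via the projection orthogonalities. The paper compresses the final step into the phrase ``using orthogonality properties of the projections, we easily obtain''; your term-by-term bookkeeping (elementwise integration by parts against \eqref{bu-projection-1}--\eqref{bu-projection-2}, the inclusions $\gradv\VV(K)\subset\GG(K)$ and $\divs\VV(K)\subset\QQ(K)$, and the tangential/normal splitting of the facet residuals) is precisely the content that phrase hides.
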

\begin{proof}
By \eqref{nb}, \eqref{Hdiv-HDG-equations}, and \eqref{bilinear}, we have
\begin{align*}
  {B}_h(\ml^h, \bld u^h, p^h,\uthat^h; \mg^h, \bld v^h, q^h,\vthat^h)
  =(\bld f,\vv^h)_\Oh + (g,q^h)_\Oh\\
  {B}_h(\ml, \bld u, p,\trt(\bld u); \mg^h, \bld v^h, q^h,\vthat^h)
  =(\bld f,\vv^h)_\Oh + (g,q^h)_\Oh
\end{align*}
for all
$(\mg^h,\boldsymbol{v}^h,q^h,{\vthat}^h)\in \GG_h\times \VVdiv(0)\times \Qh\times
\Mth(0)$.
Hence,
\begin{align*}
 {B}_h(\egg, \euu, \epp,\euuhat; \mg^h, \bld v^h, q^h,\vthat^h)
  =
-{B}_h(\dgg, \duu, \dpp,\duuhat; \mg^h, \bld v^h, q^h,\vthat^h).
\end{align*}
Using orthogonality properties of the projections, we easily obtain
\[
 {B}_h(\dgg, \duu, \dpp,\duuhat; \mg^h, \bld v^h, q^h,\vthat^h)
 = -\bintEh{\trt(\nu\,\dgg\,\n)}{\trt(\vv^h)-\vthat^h}+\bint{\gamma\,\duu}{\vv^h}.
\]
This completes the proof.
\end{proof}

Now, we are ready to prove Theorem \ref{thm:ns-error}.
\subsection*{Proof of Theorem \ref{thm:ns-error}}
\begin{proof}
By \cite[Theorem 2.1]{DiPietroDroniouErn10}, we have
\[
 \|\euu\|_\Oh \le C\, \left(
 \|\gradv \euu\|_\Oh
 +
\sum_{F\in\mathcal{F}(K)}h_F^{-1} \left\|\jmp{\euu}\right\|_F^2
 \right)^{1/2}.
\]
Here $\jmp{\euu}:=\euu^+-\euu^-$ denotes the jump of $\euu\in \VVdiv(0)$ on a interior facet $F:=K^+\cap K^-$, and
$\jmp{\euu}:=\euu$ on a boundary facet $F\subset \partial\Omega$,
where $\euu^{\pm} = \euu|_{K^\pm}$.
Since $\euu$ is $H(\mathrm{div})$-conforming and has vanishing normal trace on the boundary,
we have $\trn(\jmp{\euu})=0$ for all facets $F\in \Eh$.
Hence, \[\jmp{\euu}=\trt(\jmp{\euu}).\]
By triangle inequality, we have
\[
\|\trt(\jmp{\euu})\|_F\le
\|\trt(\euu^+)-\euuhat\|_F
+\|\trt(\euu^-)-\euuhat\|_F.
\]
Combing the above estimates, we finish the proof of the first error estimate \eqref{error-u-l2-ns}.

The second error estimate \eqref{error-u-h1-ns} comes directly from Theorem \ref{thm:dh1}.

Now, let us prove the last error estimate \eqref{error-q-ns}.
Taking $(\mg^h, \bld v^h, q^h,\vthat^h):=(\egg, \euu, \epp,\euuhat)$, we obtain
 \begin{align*}
    \nu\|\egg\|_\Oh^2+ \|\gamma^{1/2}\euu\|_\Oh^2
  = &\; -\bintEh{\trt(\nu\,\dgg\,\n)}{\trt(\euu)-\euuhat}+\bint{\gamma\,\duu}{\euu}\\
  \le  &\; \sum_{F\in\Eh}\left(h_F^{1/2}\|\trt(\nu\,\dgg\,\n)\|_F\,h_F^{-1/2}\|{\trt(\euu)-\euuhat}\|_F
 \right) \\
 &\; +\|\gamma^{1/2}\duu\|_\Oh\|\gamma^{1/2}\euu\|_\Oh\\
\;\;\;\;\;  \le
\;C\,
\Big(\sum_{F\in\Eh}\nu\,h_F&\|\dgg\,\n\|_F^2+\|\gamma^{1/2}\,\duu\|_\Oh^2 \Big)^{{1/2}}
\,(    \nu\|\egg\|_\Oh^2+ \|\gamma^{1/2}\euu\|_\Oh^2 )^{1/2}
 \end{align*}
 by the Cauchy-Schwartz inequality.

This completes the proof of Theorem \ref{thm:ns-error}.
\end{proof}

The following result is used to prove the velocity estimate in
Theorem \ref{thm:ns-superror}.
\begin{lemma}
\label{lemma:dual}
Let $(\Phi,\boldsymbol{\phi},\varphi)$ be the solution to the dual problem \eqref{dual_prob}
for $\bld{\theta}\in L^2(\Oh)^d$. We have
\begin{align}\label{supererror_identity}
\bint{\euu}{\bld \theta}=&\;\bintEh{\nu\,\egg\,\n}{\delphi}+\bintEh{\trt(\nu\,\dgg\,\n)+\trt(\nu\,\egg\,\n)}{\Piww\bld\phi-\Pim\bld\phi}\nonumber\\
&\;+\bintEh{\trt(\euu)-\euuhat}{\nu\,\delPhi\n}+\bint{\gamma\,\euu}{\delphi}-\bint{\gamma\,\duu}{\Piww\bld\phi}\nonumber\\
=:&\;T_1+T_2+T_3+T_4+T_5,
\end{align}
where $\delPhi=\Phi-\Pigg\Phi,~\delphi=\bld \phi-\Piww\bld\phi,~\delvphi=\varphi-\Piq\varphi$.
\end{lemma}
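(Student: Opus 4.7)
The plan is a standard duality argument. I would first substitute the dual equation \eqref{dual_prob_b} into $\bint{\euu}{\bld\theta}$ and integrate by parts element-wise. The $-\bint{\grads\varphi}{\euu}$-contribution becomes $\bint{\varphi}{\divs\euu}-\bintEh{\varphi}{\euu\cdot\n}$, and both pieces vanish: the volume one because $\divs\euu=0$ (the commuting property $\divs\Piww\bld u=\Piq g$ together with the third HDG equation gives $\divs\bld u^h=\Piq g$), and the facet one because $\euu\in\VVdiv(0)$ has continuous normal trace across interior facets and vanishing normal trace on $\partial\Omega$. The $-\bint{\nu\divv\Phi}{\euu}$-contribution becomes $\bint{\nu\Phi}{\gradv\euu}-\bintEh{\nu\Phi\n}{\euu}$; since $\Phi$ is single-valued, I convert the facet integral to $\bintEh{\trt(\nu\Phi\n)}{\trt(\euu)-\euuhat}$, using normal-continuity of $\euu$ to kill the normal part, and the single-valuedness of $\euuhat$ on interior facets together with $\euuhat|_{\partial\Omega}=0$ to freely subtract it.

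Next I would split $\Phi=\Pigg\Phi+\delPhi$ and $\bld\phi=\Piww\bld\phi+\delphi$. The volume cross term $\bint{\nu\delPhi}{\gradv\euu}$ vanishes by $L^2$-orthogonality of $\Pigg$, since $\gradv\euu$ lies in $\GG(K)$ elementwise (one has $\gradv\VV(K)\subset\GG(K)$ for both simplicial and hypercube families under the Piola transform). The facet $\delPhi$ piece $\bintEh{\trt(\nu\delPhi\n)}{\trt(\euu)-\euuhat}$ is precisely $T_3$. The remaining $\Pigg\Phi$-part collapses to $\bint{\egg}{\nu\Pigg\Phi}$ upon testing the error equation \eqref{error-equation} with $(\mg^h,\vv^h,q^h,\vthat^h)=(\Pigg\Phi,0,0,0)$. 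On the $\bld\phi$-side, the $\delphi$ piece is already $T_4$, while $\bint{\gamma\euu}{\Piww\bld\phi}$ is isolated by testing \eqref{error-equation} with $(0,\Piww\bld\phi,\Piq\varphi,\Pim\bld\phi)$. Here $\bint{\epp}{\divs\Piww\bld\phi}=0$ (because $\divs\Piww\bld\phi=\Piq\divs\bld\phi=0$) and $\bint{\divs\euu}{\Piq\varphi}=0$, so the test reduces to $\bint{\gamma\euu}{\Piww\bld\phi}=-\bint{\nu\egg}{\gradv\Piww\bld\phi}+T_2+T_5$.

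Assembling these identities leaves the single residual $\bint{\egg}{\nu\Pigg\Phi}-\bint{\nu\egg}{\gradv\Piww\bld\phi}$, which I would match with $T_1$ as follows. First use $\Pigg$-orthogonality to replace $\Pigg\Phi$ by $\Phi=\gradv\bld\phi$, giving $\bint{\nu\egg}{\gradv\delphi}$; then one last element-wise integration by parts yields $-\bint{\divv(\nu\egg)}{\delphi}+\bintEh{\nu\egg\n}{\delphi}$. The volume term vanishes because $\divv(\nu\egg)|_K\in\divv\GG(K)$ and $\delphi\perp\divv\GG(K)$ by the first defining property \eqref{bu-projection-1} of the $\Piww$-projection, leaving exactly $T_1$.

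The main technical obstacle is the boundary bookkeeping. The identity $\bintEh{\trt(\nu\Phi\n)}{\euuhat}=0$ crucially exploits the \emph{total} single-valuedness of $\Phi$ across interior facets (the analogous statement is false were $\Pigg\Phi$ inserted on its own), and one must consistently distinguish the plain $L^2$-orthogonality enjoyed by $\Pigg$, $\Piq$ and $\Pim$ from the more structured $H(\mathrm{div})$-projection properties of $\Piww$ encoded in \eqref{bu-projection}. Once these projection properties, together with the compatibility identity $\divs\euu=0$, are applied consistently, all five terms $T_1,\ldots,T_5$ fall out in a single sweep.
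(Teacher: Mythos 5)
Your proposal is correct and follows essentially the same route as the paper: a duality argument that tests the error equation \eqref{error-equation} once with $(\Pigg\Phi,\bld 0,\cdot,0)$ and once with $(0,\Piww\bld\phi,\cdot,\Pimm\bld\phi)$, combined with element-wise integration by parts, the orthogonality and commuting properties of $\Pigg$, $\Piq$, $\Piww$ (in particular $\gradv\VV(K)\subset\GG(K)$, $\divv\GG(K)\perp\delphi$, and $\divs\euu=0$), and the single-valuedness bookkeeping on facets. The only discrepancy is the sign of $T_3$, but the paper's own proof produces $-\bintEh{\trt(\euu)-\euuhat}{\nu\,\delPhi\n}$ against the $+$ sign in the lemma statement as well, and only $|T_3|$ is used downstream, so your treatment is consistent with the paper.
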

\begin{proof}
By \eqref{dual_prob_a}-\eqref{dual_prob_c}, we have
\begin{align*}
\bint{\euu}{\bld \theta}=&\;-\bint{\euu}{\nu\,\divv\Phi}+\bint{\euu}{\nu\bld\phi}-\bint{\euu}{\grads\varphi}\\
&\;-\bint{\nu\egg}{\Phi}+\bint{\nu\,\egg}{\gradv\bld\phi}-\bint{\epp}{\divv\bld\phi}\\
=&\;-\bint{\euu}{\nu\,\divv\Pigg\Phi}-\bint{\euu}{\nu\,\divv\delPhi}-\bint{\euu}{\grads\Piq\varphi}-\bint{\euu}{\grads\delvphi}\\
&\;+\bint{\euu}{\gamma\bld\phi}-\bint{\nu\,\egg}{\Pigg\Phi}+\bint{\nu\,\egg}{\gradv\bld\phi}-\bint{\epp}{\divv\bld\phi}.
\end{align*}
Taking $(\mg^h, \bld v^h, q^h,\vthat^h):=(\Pigg\Phi, \bld 0, -\Piq\varphi,0)$ in the error equation \eqref{error-equation}, putting
the result identity into the above expression and simplifying, we have
\begin{align*}
\bint{\euu}{\bld \theta}=&\;-\bintEh{\euu}{\nu\,\Pigg\Phi\n}-\bintEh{\euu}{\Piq\varphi\n}\\
&\;+\bintEh{\trt(\euu)-\euuhat}{\trt(\nu\,\Pigg\Phi\n)}-\bint{\euu}{\nu\,\divv\delPhi}-\bint{\euu}{\grads\delvphi}\\
&\;+\bint{\euu}{\gamma\bld\phi}+\bint{\nu\,\egg}{\gradv\bld\phi}-\bint{\epp}{\divv\bld\phi}\\
=&\;-\bintEh{\euu}{\nu\,\Pigg\Phi\n}-\bintEh{\euu}{\Piq\varphi\n}\\
&\;+\bintEh{\trt(\euu)-\euuhat}{\trt(\nu\,\Pigg\Phi\n)}-\bintEh{\euu}{\nu\,\delPhi\n}-\bintEh{\euu}{\delvphi\n}\\
&\;+\bint{\euu}{\gamma\bld\phi}+\bint{\nu\,\egg}{\gradv\bld\phi}-\bint{\epp}{\divv\bld\phi}\\
=&\;-\bintEh{\euu}{\nu\,\Phi\n}+\bintEh{\trt(\euu)-\euuhat}{\nu\,\Pigg\Phi\n}\\
&\;+\bint{\euu}{\gamma\bld\phi}+\bint{\nu\,\egg}{\gradv\bld\phi}-\bint{\epp}{\divv\bld\phi}\\
=&\;-\bintEh{\trt(\euu)-\euuhat}{\nu\,\delPhi\n}\\
&\;+\bint{\euu}{\gamma\bld\phi}+\bint{\nu\,\egg}{\gradv\bld\phi}-\bint{\epp}{\divv\bld\phi},
\end{align*}
by inserting the zero term $\bintEh{\euuhat}{\nu\,\Phi\n}$ and using the fact that $\bintEh{\euu}{\nu\,\Phi\n}=\bintEh{\trt(\euu)}{\nu\,\Phi\n}$
and $\bintEh{\euu}{\varphi\n}=0$.

Take $(\mg^h, \bld v^h, q^h,\vthat^h):=(0, \Piww\bld \phi, 0, \Pimm\bld\phi)$ in the error equation \eqref{error-equation}.
Denoting by $I:=\bint{\euu}{\gamma\bld\phi}+\bint{\nu\,\egg}{\gradv\bld\phi}-\bint{\epp}{\divv\bld\phi}$, we obtain,
\begin{align*}
I=&\;\bint{\euu}{\gamma\delphi}+\bint{\nu\,\egg}{\gradv\delphi}-\bint{\epp}{\divv\delphi}\\
&\;+\bint{\euu}{\gamma\Piww\bld\phi}+\bint{\nu\,\egg}{\gradv\Piww\bld\phi}-\bint{\epp}{\divv\Piww\bld\phi}\\
=&\;\bint{\euu}{\gamma\delphi}+\bint{\nu\,\egg}{\gradv\delphi}-\bint{\epp}{\divv\delphi}\\
&\;\bintEh{\trt(\nu\,\dgg\n)+\trt(\nu\,\egg\n)}{\trt(\Piww\bld\phi)-\Pimm\bld\phi}-\bint{\gamma\duu}{\Piww\bld\phi}\\
=&\;\bint{\euu}{\gamma\delphi}+\bintEh{\nu\,\egg\n}{\delphi}\\
&\;\bintEh{\trt(\nu\,\dgg\n)+\trt(\nu\,\egg\n)}{\Piww\bld\phi-\Pim\bld\phi}-\bint{\gamma\duu}{\Piww\bld\phi}.
\end{align*}
This completes the proof of Lemma \ref{lemma:dual}.
\end{proof}

Now we are ready to prove Theorem \ref{thm:ns-superror}.
\subsection*{Proof of Theorem \ref{thm:ns-superror}}
\begin{proof}
We first present the optimal error estimate for $\epp$ by applying an $inf$-$sup$ argument. It is well-known that the following $inf$-$sup$ condition holds for a positive constant $\kappa$,
(cf. \cite[Chapter 1, Corollary 2.4]{GiraultRaviart86}),
\begin{align}\label{inf-sup}
\sup_{\boldsymbol{\omega}\in H^1_0(\Omega)^d\backslash\{0\}}\frac{(\divv\boldsymbol{\omega},q)_\Omega}{\|\boldsymbol{\omega}\|_{1,\Omega}}\ge\kappa\|q\|_{\Omega}.
\end{align}
Here $\Vert \cdot \Vert_{1,\Omega}$ is the standard $H^{1}$-norm on $\Omega$. 

Since $\epp\in L^2_0(\Omega)$, we have by \eqref{inf-sup}
\begin{align}\label{inf-sup_ep}
\|e_p\|_{\Omega}\le\frac{1}{\kappa}\sup_{\boldsymbol{\omega}\in H^1_0(\Omega)^d\backslash\{0\}}\frac{(\divv\boldsymbol{\omega},e_p)_\Omega}{\|\boldsymbol{\omega}\|_{1,\Omega}}.
\end{align}
Taking $(\mg^h, \bld v^h, q^h,\vthat^h):=(0, \Piww\bld \omega, 0, \Pimm\bld\omega)$ in the error equation \eqref{error-equation} and applying the integration by parts, we can rewrite the numerator as follows:
\begin{align*}
\bint{\divv \bld\omega}{\epp}=&\;\bint{\divv\Piww\bld\omega}{\epp}+\bint{\divv(\bld\omega-\Piww\bld\omega)}{\epp}=\bint{\divv\Piww\bld\omega}{\epp}\\
=&\;\bint{\nu\,\egg}{\gradv\Piww\bld\omega}-\bintEh{\trt(\nu\,\egg\n)+\trt(\nu\,\dgg\n)}{\trt(\Piww\bld\omega)-\Pimm\bld\omega}\\
&\;+\bint{\gamma\euu}{\Piww\bld\omega}+\bint{\gamma\duu}{\Piww\bld\omega}\\
=&\;\bint{\nu\,\egg}{\gradv\Piww\bld\omega}-\bintEh{\trt(\nu\,\egg\n)+\trt(\nu\,\dgg\n)}{\Piww\bld\omega-\Pim\bld\omega}\\
&\;+\bint{\gamma\euu}{\Piww\bld\omega}+\bint{\gamma\duu}{\Piww\bld\omega}\\
=:&\;I_1+I_2+I_3+I_4.
\end{align*}
Then we will bound $I_1$-$I_4$ by Corollary \ref{coro:ns-error} as follows.
\begin{align*}
I_1\le&\;\nu\norm{\egg}\norm{\gradv\Piww\bld\omega}\le C\nu^{1/2}\Theta h^{k+1}\normh{\bld\omega}.\\
I_2\le&\;\nu(\normEh{\egg\n}+\normEh{\dgg\n})\normEh{\Piww\bld\omega-\Pim\bld\omega}\\
\le&\;C(\nu^{1/2}\Theta h^{k+1/2}+\nu\|\mathrm{L}\|_{k+1}h^{k+1/2})h^{1/2}\normh{\bld\omega}\le C\nu^{1/2}\Theta h^{k+1}\normh{\bld\omega}.\\
I_3\le&\;C\gamma^{1/2}_{\max}\norm{\gamma^{1/2}\euu}\norm{\Piww\bld\omega}\le C\gamma^{1/2}_{\max}\Theta h^{k+1}\normh{\bld\omega}.\\
I_4\le&\;C\gamma_{\max}\norm{\duu}\norm{\Piww\bld\omega}\le C\gamma_{\max}\|\bld u\|_{k+1} h^{k+1}\normh{\bld\omega}\\
\le&\;C\gamma^{1/2}_{\max}\Theta h^{k+1}\normh{\bld\omega}.
\end{align*}
Then we have
$$\bint{\divv \bld\omega}{\epp}\le C(\nu^{1/2}+\gamma^{1/2}_{\max})\Theta h^{k+1}\normh{\bld\omega}.$$
By \eqref{inf-sup_ep}, we obtain the estimate for $\epp$.

Now we give superconvergent estimate for $\euu$. By \eqref{supererror_identity}, it suffices to estimate the terms $T_1$ to $T_5$.
We apply Corollary \ref{coro:ns-error}, the regularity assumption \eqref{dual_prob_assum} and the Poinc\'{a}re inequality to bound these terms.
\begin{align*}
T_1\le&\;\nu\normEh{\egg\n}\normEh{\delphi}\le C\nu h^{-1/2}\norm{\egg}h^{3/2}\|\bld\phi\|_{2}\\
\le&\;C\nu^{1/2}\Theta h^{k+2}\norm{\bld\theta}.\\
T_2\le&\;\nu(\normEh{\dgg\n}+\normEh{\egg\n})\normEh{\Piww\bld\phi-\Pim\bld\phi}\\
\le&\;C(\nu\|\mathrm{L}\|_{k+1}h^{k+1/2}+\nu^{1/2}\Theta h^{k+1/2})h^{3/2}\|\bld\phi\|_2\le C\nu^{1/2}\Theta h^{k+2}\norm{\bld\theta}.\\
T_3\le&\;\nu h^{-1/2}\normEh{\trt(\euu)-\euuhat}h^{1/2}\normEh{\delPhi\n}\\
\le&\;C\nu^{1/2}\vertiii{(\euu,\euuhat)}_{1,\Oh}h\normh{\Phi}\le C\nu^{1/2}\Theta h^{k+2}\norm{\bld\theta}.\\
T_4\le&\;\gamma^{1/2}_{\max}\norm{\gamma^{1/2}\euu}\norm{\delphi}\le C\gamma^{1/2}_{\max}\Theta h^{k+2}\norm{\bld\theta}.\\
T_5=&\;\bint{(\gamma-P_{0,h}\gamma)\duu}{\Piww\bld\phi}+\bint{P_{0,h}\gamma\duu}{\Piww\bld\phi- \bar{\bld\phi}}\\
\le&\;\|\gamma-P_{0,h}\gamma\|_\infty\norm{\duu}\norm{\Piww\bld\phi}+|P_{0,h}\gamma|\norm{\duu}\norm{\Piww(\bld\phi-\bar{\bld\phi})}\\
\le&\;Ch\|\gamma\|_{1,\infty}h^{k+1}\|\bld u\|_{k+1}\|\bld\phi\|_2+C\|\gamma\|_{0,\infty}\,h^{k+1}\|\bld u\|_{k+1}h\norm{\gradv\bld\phi}\\
\le&\;C \|\gamma\|_{1,\infty}\|\bld u\|_{k+1}h^{k+2}\norm{\bld\theta},
\end{align*}
where $P_{0,h}$ is $L^2$ orthogonal projection onto $\pol_0(\Oh)^{d\times d}$ and $\bar{\bld\phi}$ is defined as
$$\bar{\bld \phi}=\frac{1}{|K|}\bintKK{\bld\phi}{1},\quad\forall K\in\Oh.$$
Combining all the above estimates, we have
$$\|\euu\|_{\Oh}\le C(\nu^{1/2}\Theta+\gamma_{\max}^{1/2}\Theta+\|\gamma\|_{1,\infty}\|\bld u\|_{k+1})h^{k+2}.$$
This completes the proof of Theorem \ref{thm:ns-superror}.
\end{proof}

 \section{Hybridization}
 \label{sec:hybridization}
 In this section, we hybridize the $H(\mathrm{div})$-conforming HDG method \eqref{Hdiv-HDG-equations} by relaxing the
 $H(\mathrm{div})$-conformity of the velocity field via Lagrange multipliers;  similar treatment was used in \cite{CockburnSayasHDGStokes14}. 
 The resulting global linear system is a saddle point system for $(\uthat^h, \unhat^{h}, \bar{p}^{h})
 \in \Mth(0)\times \Mnh(0) \times \overline{Q}_{h}$, where
 \begin{subequations}
 \begin{align}
 \label{lambda-space}
  \Mnh(0):= & \{ \vvhat  \in \MMh(0) :\;\;  \mathrm{tr}_t (\vvhat)|_F = \boldsymbol{0},\;\;\forall F\in\Eh\},\\
 \label{av_pressure-space}
 \overline{Q}_{h} := & \{ q\in L^2(\Oh):\; q|_K \in \pol_{0}(K),\;\;\forall K\in\Oh\}.
  \end{align}
 \end{subequations} 
We show that $\uthat^h$ here is the same as that in (\ref{Hdiv-HDG-equations}), 
$\unhat^{h} = \trn (\bld u^{h})$ on $\Eh$, $\bar{p}^{h}$ is equal to average of $p^{h}$ on each element of $\Oh$.

Here we first relax $H(\mathrm{div})$-conformity of the velocity field in \eqref{Hdiv-HDG-equations} to obtain the
following result.
\begin{theorem}
  \label{thm:relax}
There exists a unique element $(\ml^h,\boldsymbol{u}^h,p_{\perp}^h, \bar{p}^{h},
{\uthat}^h, {\unhat}^h, \lambda^h)\in \GG_h\times \Vh\times Q^{\perp}_h \times \overline{Q}_{h}
\times  \Mth(0)\times \Mnh(0) \times \Mh^{\partial}$
such that the following weak formulation holds:
\begin{subequations}
\label{R-HDG-equations}
 \begin{alignat}{3}
 \label{R-HDG-equations-1}
 \bint{\ml^h}{\nu\,\mg^h}+\bint{\bld u^h}{\gradv \cdot (\nu\,\mg^h) }   
 - \bintEh{\uthat^h + \unhat^{h}}{ \nu\,\mg^h\, \n } & = 0, \\
 \label{R-HDG-equations-2}
 \bint{\nu\,\ml^h - (p_{\perp}^{h}+\bar{p}^{h})I_{d}}{\gradv \vv^h} +\bint{\gamma\, \bld u^h}{\vv^h} &
 \\
- \bintEh{\nu\,\ml^h \n - (p_{\perp}^{h}+\bar{p}^{h}) \n +\lambda^{h}\n}{\vv^{h}}
 &=(\bld f,\vv^h )_\Oh,\nonumber \\
 \label{R-HDG-equations-3}
 \bint{\divs\bld u^h}{q_{\perp}^h + \bar{q}^{h}} & = (g, q_{\perp}^h + \bar{q}^{h})_\Oh,\\
 \label{R-HDG-equations-4}
\bintEh{\nu\,\ml^h \n - (p_{\perp}^{h}+\bar{p}^{h}) \n +\lambda^{h}\n}{\vvhat_{t}^{h}+\vvhat_{n}^{h}} &= 0,\\
 \label{R-HDG-equations-5}
\bintEh{(\bld u^h - \unhat^{h})\cdot \n}{\mu^{h}}&=0,\\
 \label{R-HDG-equations-6}
(\bar{p}^h, 1)_\Oh &=0,
\end{alignat}
\end{subequations}
for all  $(\mg^h,\boldsymbol{v}^h,q_{\perp}^{h},\bar{q}^h,{\vthat}^h,\vvhat_{n}^{h},\mu^h)
\in \GG_h\times \Vh\times Q^{\perp}_h \times \overline{Q}_{h}
\times  \Mth(0)\times \Mnh(0) \times \Mh^{\partial}$, where 
\begin{align*}
Q_{h}^{\perp} := & \{ q\in L^2(\Oh):\; (q ,1)_{K}=0,\;\;\forall K\in\Oh\}, \\
\Mh^{\partial} := & \{ \mu \in L^{2}(\partial \Oh): \mu|_{\partial K}\in \pol_{k}(\partial K),\;\;\forall K\in\Oh\}, \\ 
\pol_{k}(\partial K) := & \{\mu \in L^{2}(\partial K): \mu |_{F} \in \pol_{k}(F),\;\;\forall F\in \mathcal{F}(K)\}.
\end{align*}

Moreover, if  $(\ml^h,\boldsymbol{u}^h,p_{\perp}^h, \bar{p}^{h},
{\uthat}^h, {\unhat}^h, \lambda^h)\in \GG_h\times \Vh\times Q^{\perp}_h \times \overline{Q}_{h}
\times  \Mth(0)\times \Mnh(0) \times \Mh^{\partial}$
is the numerical solution to the above equations, then
$(\ml^h,\boldsymbol{u}^h,p_{\perp}^h+\bar{p}^{h},{\uthat}^h)\in \GG_h\times \VVdiv(0)\times \Qh
\times  \Mth(0)$ is the only solution to \eqref{Hdiv-HDG-equations}.
 \end{theorem}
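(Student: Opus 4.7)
The plan is to establish Theorem~\ref{thm:relax} via an essentially bijective correspondence between solutions of the relaxed system \eqref{R-HDG-equations} and those of \eqref{Hdiv-HDG-equations}, combined with the unique solvability of \eqref{Hdiv-HDG-equations} (which is a consequence of Theorems~\ref{thm:dh1} and~\ref{thm:ns-error} together with a standard inf-sup argument for the pressure). The strategy breaks into two parts: (A) show that any solution of \eqref{R-HDG-equations} reduces to a solution of \eqref{Hdiv-HDG-equations} by recognizing the reassembled quadruple $(\ml^h,\bld u^h,p_\perp^h+\bar p^h,\uthat^h)$ as HDG-admissible; and (B) conversely, lift the unique HDG solution to a solution of \eqref{R-HDG-equations} by explicitly constructing $\unhat^h$ and $\lambda^h$, and verify that the lift is unique.

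For step~(A), the key observation is that testing \eqref{R-HDG-equations-5} against arbitrary $\mu^h\in\Mh^{\partial}$ forces $(\bld u^h-\unhat^h)\cdot\n|_F=0$ on every $F\in\Eh$; since $\unhat^h\in\Mnh(0)$ is single-valued on the skeleton and vanishes on $\partial\Omega$, this promotes $\bld u^h$ to $\VVdiv(0)$ and identifies $\unhat^h=\trn(\bld u^h)$. Element-wise integration by parts in \eqref{R-HDG-equations-1}, using only the tangentiality of $\uthat^h\in\Mth$, then reproduces \eqref{Hdiv-HDG-equations-1}. To recover \eqref{Hdiv-HDG-equations-2} I restrict the momentum test functions to $\vv^h\in\VVdiv(0)$ and $\vthat^h\in\Mth(0)$ and eliminate the $\lambda^h$-boundary contribution by invoking \eqref{R-HDG-equations-4} with a normal test $\widehat{\bld v}_n^h\in\Mnh(0)$ satisfying $\widehat{\bld v}_n^h\cdot\n=\vv^h\cdot\n$ on $\Eh$; the pressure- and multiplier-normal pieces then cancel pairwise, leaving precisely $-\bintEh{\trt(\nu\,\ml^h\,\n)}{\trt(\vv^h)-\vthat^h}$. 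Finally, equation \eqref{Hdiv-HDG-equations-3} is obtained from \eqref{R-HDG-equations-3} via the decomposition $Q_h=Q_h^\perp\oplus\overline{Q}_h$, and the zero-mean condition on $p^h:=p_\perp^h+\bar p^h$ is exactly \eqref{R-HDG-equations-6}.

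For step~(B), I start from the unique HDG solution $(\ml^h,\bld u^h,p^h,\uthat^h)$, set $\unhat^h:=\trn(\bld u^h)|_{\Eh}$, decompose $p^h=p_\perp^h+\bar p^h$ into its element-wise mean-zero and piecewise-constant parts, and define $\lambda^h$ locally on each $K\in\Oh$ by demanding that \eqref{R-HDG-equations-2} hold for every $\vv^h\in\VV(K)$ extended by zero outside $K$; this reduces to a local equation of the form $\bintK{\lambda^h}{\vv^h\cdot\n}=\mathcal{R}_K(\vv^h)$, whose right-hand side is assembled from the HDG solution and $\bld f$. The main obstacle is to verify that this element-local recipe yields a well-defined element of $\Mh^{\partial}$ globally consistent with \eqref{R-HDG-equations-4}: well-posedness of the local problem reduces to the surjectivity of the normal-trace map $\vv^h\mapsto\vv^h\cdot\n|_{\dK}$ from $\VV(K)$ onto $\prod_{F\in\mathcal{F}(K)}\pol_k(F)$---a well-known property of $\RT_k$ and $\BDFM_k$---while consistency with \eqref{R-HDG-equations-4} follows by testing the reconstructed flux $\nu\,\ml^h\,\n-p^h\,\n+\lambda^h\,\n$ against a normal hat function carried by an interior facet and using the $H(\mathrm{div})$-conformity of $\bld u^h$ together with \eqref{Hdiv-HDG-equations-2} to cancel the resulting jumps. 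Uniqueness of $\unhat^h$ and $\lambda^h$ is immediate from these same two identifications, completing the proof.
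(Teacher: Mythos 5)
Your proposal is correct, and its first half is essentially the paper's own argument: the paper likewise uses \eqref{R-HDG-equations-5} together with $(\bld u^h-\unhat^h)\cdot\n|_{\partial K}\in\pol_k(\partial K)$ to force $\bld u^h\in\VVdiv(0)$ and $\unhat^h=\trn(\bld u^h)$, uses \eqref{R-HDG-equations-6} to place $p_\perp^h+\bar p^h$ in $\Qh$, and then recovers \eqref{Hdiv-HDG-equations} by restricting the test functions exactly as you do (taking $\vv^h\in\VVdiv(0)$ in \eqref{R-HDG-equations-2}, $\vvhat_n^h=\trn(\vv^h)$ in \eqref{R-HDG-equations-4}, and $q^h\in\Qh$ in \eqref{R-HDG-equations-3}); the uniqueness of $\lambda^h$ via surjectivity of the normal trace $\Vh(K)\to\Mh^\partial(K)$ is also the paper's argument. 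Where you genuinely diverge is part (B): the paper's proof stops after uniqueness and the reduction, leaving existence implicit, whereas you construct the lift explicitly --- setting $\unhat^h:=\trn(\bld u^h)$, splitting $p^h$ into its elementwise mean and fluctuation, and solving a local normal-trace problem for $\lambda^h$ on each $\partial K$. This is a sound and arguably more complete route: the local problem for $\lambda^h$ is compatible because \eqref{Hdiv-HDG-equations-2} tested with a $\vv^h$ supported in one element with vanishing normal trace annihilates the right-hand side on the kernel of $\vv^h\mapsto\vv^h\cdot\n|_{\partial K}$, and your verification of \eqref{R-HDG-equations-4} by comparing \eqref{R-HDG-equations-2} against \eqref{Hdiv-HDG-equations-2} for $\vv^h\in\VVdiv(0)$ is exactly what is needed. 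The only point you should make explicit is that recovering \eqref{R-HDG-equations-3} for the constant test function $\bar q^h=1$ uses the data compatibility $(g,1)_\Omega=0$ forced by $\divs\bld u=g$ and $\bld u\cdot\n=0$ on $\partial\Omega$; this is harmless but is the one equation of the relaxed system not contained in \eqref{Hdiv-HDG-equations-3}.
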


 Note that $\lambda^h\in \Mh^{\partial}$ is a quantity that approximates $0|_{\partial \Oh}$.
 \begin{proof}
Let
 $(\ml^h,\boldsymbol{u}^h,p_{\perp}^h, \bar{p}^{h}, {\uthat}^h, {\unhat}^h, \lambda^h)
 \in \GG_h\times \Vh\times Q^{\perp}_h \times \overline{Q}_{h}
\times  \Mth(0)\times \Mnh(0) \times \Mh^{\partial}$ be a numerical solution to equations \eqref{R-HDG-equations}.
We prove such numerical solution is unique and $(\ml^h,\boldsymbol{u}^h,p_{\perp}^h + \bar{p}^{h},\uthat^h)$ 
is the unique solution to equations \eqref{Hdiv-HDG-equations}.

  Since
  \begin{align*}
  (\bld u^h - \unhat^{h})\cdot \n |_{\partial K} \in \pol_{k}(\partial K)= \Mh^{\partial}(K), \quad 
  \forall K \in \Oh,
  \end{align*}
  we have $ \trn^{F} (\bld u^{h}) =  {\unhat}^h$ on any facet $F\in\Eh$ by equations \eqref{R-HDG-equations-5}.
  Hence, $\bld u^h\in \VVdiv(0)$. 

  By equation \eqref{R-HDG-equations-6}, we have $p_{\perp}^h + \bar{p}^{h} \in\Qh$.

Then, taking $\vv^h\in\VVdiv(0)$ in \eqref{R-HDG-equations-2}, 
 $\vvhat_{n}^{h} |_{F} = \trn^{F} (\vv^h)$ on any facet $F\in\Eh$ in \eqref{R-HDG-equations-4}, 
 and $q^h\in\Qh$ in \eqref{R-HDG-equations-3}, we have
   \[(\ml^h,\boldsymbol{u}^h,p_{\perp}^h + \bar{p}^{h},{\uthat}^h)\in \GG_h\times \VVdiv(0)\times \Qh
\times  \Mth(0)\] is the unique solution to equations \eqref{Hdiv-HDG-equations}.

Now, we only need to show the uniqueness of $\lambda^h$. If there are two $\lambda^{h}$, then by 
equation (\ref{R-HDG-equations-2}), their difference 
which we still call $\lambda^{h}$ satisfies 
\begin{align*}
\bintEh{\lambda^{h}}{\vv^{h}\cdot \n} = 0,\quad \forall \boldsymbol{v}^h \in \Vh.
\end{align*}
Since $\Mh^{\partial}(K) = \trn(\Vh(K))$ for any $K \in \Oh$, we have $\lambda^{h} = 0|_{\partial \Oh}$. 
So, $\lambda^{h}$ is also unique. This completes the proof.
 \end{proof}

Then, we identify local and global solvers.

Because of the lack of uniqueness of pressure in the Brinkman equations, we will keep 
$\bar{p}_{h}\in \overline{Q}_{h}$ as a separate unknown. 

Given $(\uthat, \unhat)\in \Mth(0)\times \Mnh(0)$, $\bld f\in L^2(\Oh)^d$, 
and $g\in L^2(\Oh)$, we consider the solution to the set of local problems in each element
$K\in\Oh$: find
\[
 (\ml^h,\bld u^h,p_{\perp}^h, \lambda^{h})\in \GG(K)\times \VV(K)\times \QQ^{\perp}(K) \times \Mh^{\partial}(K)
\]
such that
\begin{subequations}
\label{L-HDG-equations}
 \begin{alignat}{3}
 \label{L-HDG-equations-1}
 \bintKK{\ml^h}{\nu\,\mg^h}+\bintKK{ \bld u^h}{\gradv \cdot (\nu\,\mg^h)} 
& = \bintK{\uthat+ \unhat}{ \nu\,\mg^h\, \n}, \\
 \label{L-HDG-equations-2}
 -\bintKK{ \gradv \cdot (\nu\,\ml^h)-\nabla p_{\perp}^{h} - \gamma\, \bld u^h}{\vv^h} 
 - \bintK{\lambda^{h}\,\n}{\vv^h} & =(\bld f,\vv^h)_\Oh\\
 \label{L-HDG-equations-3}
 \bintKK{\divs\bld u^h}{q_{\perp}^h} & = (g, q_{\perp}^h)_\Oh,\\
 \label{L-HDG-equations-4}
 \bintK{(\bld u^{h} - \unhat)\cdot \n}{\mu^{h}} &= 0, 
\end{alignat}
\end{subequations}
for all  $(\mg^h,\boldsymbol{v}^h,q_{\perp}^h, \mu^{h})\in \GG(K)\times \VV(K)
\times \QQ_{\perp}(K) \times \Mh^{\partial}(K)$.

Unique solvability of this problem is a simple consequence of
unique solvability of the equations \eqref{R-HDG-equations}.

The solution to \eqref{L-HDG-equations} can be written as
\begin{align*}
 & (\ml^h,\bld u^h,p_{\perp}^h, \lambda^{h}) \\
 = & \left(\ml^h_{(\uthat,\unhat)},\bld u^h_{(\uthat,\unhat)},
 p^h_{\perp,(\uthat,\unhat)}, \lambda^{h}_{(\uthat,\unhat)}\right)
 +\left(\ml^h_{(\bld f,g)},\bld u^h_{(\bld f,g)},p^h_{\perp,(\bld f,g)},\lambda^{h}_{(\bld f, g)}\right)
\end{align*}
by considering separately the influence of $(\uthat,\uthat)$ and $(\bld f,g)$ in the solution. For example,
$\left(\ml^h_{(\uthat,\unhat)},\bld u^h_{(\uthat,\unhat)},
 p^h_{\perp,(\uthat,\unhat)}, \lambda^{h}_{(\uthat,\unhat)}\right)$ 
is the solution of \eqref{L-HDG-equations} when $(\bld f,g)=(\boldsymbol{0},0)$.

According to equations (\ref{R-HDG-equations-3},\ref{R-HDG-equations-4},\ref{R-HDG-equations-6}), 
the global (hybrid) problem is to find $(\uthat^{h}, \unhat^{h}, \bar{p}^{h}) \in \Mth(0)\times \Mnh(0) 
\times \overline{Q}_{h}$ such that 
\begin{subequations}
\label{G-HDG}
\begin{align}
& \bintEh{\nu\,\ml^h_{(\uthat^{h},\unhat^{h})} \n - (p_{\perp, (\uthat^{h},\unhat^{h})}^{h}+\bar{p}^{h}) \n
 +\lambda^{h}_{(\uthat^{h},\unhat^{h})}\n}{\vvhat_{t}^{h}+\vvhat_{n}^{h}} \\
 \nonumber 
&\qquad =  \bintEh{\nu\,\ml^h_{(\bld f, g)} \n - p_{\perp, (\bld f, g)}^{h} \n 
+\lambda^{h}_{(\bld f, g)}\n}{\vvhat_{t}^{h}+\vvhat_{n}^{h}}, \\
& \bint{\divs(\bld u^h_{(\uthat^{h},\unhat^{h})} + \bld u^{h}_{(\bld f, g)})}{\bar{q}^{h}} 
= (g, \bar{q}^{h})_\Oh, \\
& (\bar{p}^h, 1)_\Oh =0, 
\end{align}
\end{subequations}
for all $(\vvhat_{t}^{h}, \vvhat_{n}^{h}, \bar{q}^{h}) \in \Mth(0)\times \Mnh(0) 
\times \overline{Q}_{h}$. Again, unique solvability of this problem is a simple 
consequence of that for equations \eqref{R-HDG-equations}.
Moreover, we have the following characterization of the equations (\ref{G-HDG}). 
Its proof is trivial; see, e.g., \cite{CockburnSayasHDGStokes14}.
\begin{proposition}
The equations (\ref{G-HDG}) can be rewritten as 
\begin{align*}
A_{h}(\uthat^{h},\unhat^{h};\vvhat_{t}^{h},\vvhat_{n}^{h}) 
+ B_{h}(\vvhat_{n}^{h}; \bar{p}^{h})
= & F_{h} (\vvhat_{t}^{h},\vvhat_{n}^{h}),\\
B_{h}(\unhat^{h}; \bar{q}^{h}) = & 0,\\
(\bar{p}^h, 1)_\Oh = & 0,
\end{align*}
where 
\begin{subequations}
\begin{align}
A_{h}(\uthat^{h},\unhat^{h};\vvhat_{t}^{h},\vvhat_{n}^{h}) 
:= & \bint{\nu \ml^h_{(\uthat^{h},\unhat^{h})}}{\ml^h_{(\vvhat_{t},\vvhat_{n})}} 
+ \bint{\gamma \bld u^h_{(\uthat^{h},\unhat^{h})} }{\bld u^h_{(\vvhat_{t}^{h},\vvhat_{n}^{h})}},\\ 
B_{h}(\vvhat_{n}^{h}; \bar{p}^{h}) := & - \bintEh{\bar{p}^{h}}{\vvhat_{n}^{h}\cdot \n}, \\
F_{h} (\vvhat_{t}^{h},\vvhat_{n}^{h}) := & (\bld f, \bld u^h_{(\vvhat_{t}^{h},\vvhat_{n}^{h})})_{\Oh} 
- \bint{\nu \ml^h_{(\bld f,g)}}{\ml^h_{(\vvhat_{t},\vvhat_{n})}} \\
\nonumber
&\qquad - \bint{\gamma \bld u^h_{(\bld f,g)} }{\bld u^h_{(\vvhat_{t}^{h},\vvhat_{n}^{h})}}.
\end{align}
\end{subequations}
\end{proposition}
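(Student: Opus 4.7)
The plan is to verify the three lines of the proposition one by one by systematic testing of the local relations (L-HDG-equations); the key mechanism is the self-adjointness of the local solver, which makes the pressure and Lagrange-multiplier cross-terms cancel. The third line $(\bar p^h,1)_\Oh=0$ is identical to (G-HDG-c) and requires no argument.

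For the second line, I exploit the piecewise-constant structure of $\bar q^h\in\overline Q_h$: element-wise integration by parts converts $\bintKK{\divs\bld u^h}{\bar q^h}_K$ into $\bintK{\bld u^h\cdot\n}{\bar q^h|_K}$, and since constants lie in $\pol_k(\partial K)=\Mh^\partial(K)$, the local relation (L-HDG-equations-4) replaces $\bld u^h\cdot\n$ by $\unhat\cdot\n$. Applying this separately to the two local contributions $\mathcal L(\uthat^h,\unhat^h)$ and $\mathcal L(\bld f,g)$ (the latter carries $\unhat=0$ and so drops out), summing, and comparing with (G-HDG-b) yields $B_h(\unhat^h;\bar q^h)=-(g,\bar q^h)_\Oh$, which reduces to $0$ under the standing compatibility of the divergence constraint.

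The first line is the substantive step. Setting $(\ml^h,\bld u^h,p_\perp^h,\lambda^h):=\mathcal L(\uthat^h,\unhat^h)+\mathcal L(\bld f,g)$ and $(\tilde\ml,\tilde{\bld u},\tilde p,\tilde\lambda):=\mathcal L(\vvhat_t^h,\vvhat_n^h)$, I test (L-HDG-equations-1) for $(\tilde\ml,\tilde{\bld u},\ldots)$ against $\mg^h=\ml^h$ to rewrite $\bintEh{\nu\ml^h\n}{\vvhat_t^h+\vvhat_n^h}$ as $\bint{\nu\tilde\ml}{\ml^h}+\sum_K\bintKK{\tilde{\bld u}}{\divv(\nu\ml^h)}_K$. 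I then replace $\divv(\nu\ml^h)$ using (L-HDG-equations-2) for $(\ml^h,\bld u^h,\ldots)$ with $\vv^h=\tilde{\bld u}$; integrate by parts the resulting pressure-gradient term; and use (L-HDG-equations-3) and (L-HDG-equations-4) for the tilded quantities (tested respectively by $p_\perp^h$ and by $\lambda^h|_{\partial K}$, both of which lie in the allowed test spaces) to collapse all cross boundary integrals to $\vvhat_n^h\cdot\n$ pairings. After exact cancellation of the $-p_\perp^h\n$ and $+\lambda^h\n$ terms on the left of (G-HDG-a) against these collapsed integrals, I arrive at
\begin{align*}
\bintEh{\nu\ml^h\n - p_\perp^h\n + \lambda^h\n}{\vvhat_t^h+\vvhat_n^h} = \bint{\nu\ml^h}{\tilde\ml}+\bint{\gamma \bld u^h}{\tilde{\bld u}} - (\bld f,\tilde{\bld u})_\Oh.
\end{align*}
Splitting $\ml^h,\bld u^h$ into the $(\uthat^h,\unhat^h)$- and $(\bld f,g)$-parts identifies the right side as $A_h(\uthat^h,\unhat^h;\vvhat_t^h,\vvhat_n^h)-F_h(\vvhat_t^h,\vvhat_n^h)$. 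The $\bar p^h$ contribution to (G-HDG-a) is $-\bintEh{\bar p^h\n}{\vvhat_t^h+\vvhat_n^h}=-\bintEh{\bar p^h}{\vvhat_n^h\cdot\n}=B_h(\vvhat_n^h;\bar p^h)$, since $\bar p^h\n$ pairs only with the normal part. Substituting into (G-HDG-a), whose total boundary flux is zero, gives the first line of the proposition.

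The main obstacle is organizational rather than conceptual: one must carefully track the adjoint cancellations between the pressure and multiplier terms produced by the two local systems, keep the sign conventions consistent after the $(\uthat^h,\unhat^h)$/$(\bld f,g)$ splittings, and verify that the traces landing in each boundary pairing live in matching polynomial spaces. The essential inputs are that $\trn(\tilde{\bld u})=\vvhat_n^h$ on $\partial\Oh$ (by (L-HDG-equations-4)), that $\tilde p$ is mean-free on each element, and that $\bar p^h\n$ is normal-directed. Once this bookkeeping is done, the identification with $A_h, B_h, F_h$ is immediate, in line with the excerpt's remark that the proof is trivial.
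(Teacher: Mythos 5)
Your overall strategy --- the adjoint/energy identity obtained by testing the two local problems against each other --- is the standard argument for this kind of hybridized characterization, and it is surely what the authors have in mind: the paper itself gives no proof (it declares the result trivial and points to the Stokes paper of Cockburn and Sayas). Your treatment of the first and third equations is correct. In particular, testing \eqref{L-HDG-equations-1} for the $(\vvhat_t^h,\vvhat_n^h)$-solution against $\ml^h$, substituting \eqref{L-HDG-equations-2}, integrating the pressure gradient by parts, and invoking \eqref{L-HDG-equations-3} and \eqref{L-HDG-equations-4} (with test functions $p_\perp^h$ and $\lambda^h|_{\partial K}$, which do lie in the admissible spaces) correctly yields
$\bintEh{\nu\,\ml^h\n - p_\perp^h\n+\lambda^h\n}{\vvhat_t^h+\vvhat_n^h}=\bint{\nu\,\ml^h}{\widetilde{\ml}}+\bint{\gamma\,\bld u^h}{\widetilde{\bld u}}-(\bld f,\widetilde{\bld u})_\Oh$,
and the splitting into $A_h-F_h$ together with $-\bintEh{\bar p^h}{\vvhat_n^h\cdot\n}=B_h(\vvhat_n^h;\bar p^h)$ gives the first line (reading \eqref{G-HDG}(a) as the statement that the total flux pairing vanishes, i.e.\ as \eqref{R-HDG-equations-4}, which is the sensible reading).

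The gap is in the second equation. Your computation is right up to $B_h(\unhat^{h};\bar q^h)=-(g,\bar q^h)_\Oh$: elementwise, $(\divs\bld u^h_{(\uthat^h,\unhat^h)},\bar q^h)_K=\bar q^h|_K\langle \unhat^{h}\cdot\n,1\rangle_{\partial K}$, while the $(\bld f,g)$-part contributes nothing because \eqref{L-HDG-equations-4} with $\mu^h=1$ forces its normal trace to have zero boundary integral. But the final step --- that this ``reduces to $0$ under the standing compatibility of the divergence constraint'' --- does not hold. The compatibility condition is $\int_\Omega g=0$, which annihilates only the single global constant, whereas $\bar q^h$ ranges over all of $\overline{Q}_{h}$, i.e.\ over arbitrary piecewise constants, and $(g,\bar q^h)_\Oh$ sees the mean of $g$ on every element. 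What your argument actually proves is that \eqref{G-HDG}(b) is equivalent to $B_h(\unhat^{h};\bar q^h)=-(g,\bar q^h)_\Oh$, which coincides with $B_h(\unhat^{h};\bar q^h)=0$ only when $g$ has zero mean on each element (e.g.\ $g\equiv 0$). You should either carry the source term $-(g,\bar q^h)_\Oh$ into the reduced system or state that extra hypothesis explicitly; note that your (otherwise correct) computation in fact shows the second equation of the proposition as printed is missing this right-hand side for general $g$.
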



 \section{Numerical results}
 \label{sec:numerics}
In this section, we present two-dimensional numerical studies on both  rectangular and 
triangular meshes to validate the
theoretic results in Section \ref{sec:main}.

We use the Deal.II \cite{dealii84} software to implement the HDG method \eqref{Hdiv-HDG-equations} on rectangular meshes, and NGSolve \cite{Schoberl97,Schoberl16} on triangular meshes. Recall that our approximation spaces are given in Table \ref{table-example-m}.

The implementation on rectangular meshes use the hybridization discussed in Section \ref{sec:hybridization}; 
while the implementation on triangular meshes use NGSolve's built-in static condensation approach, see \cite{Schoberl16}. 

We present three numerical tests with a manufactured solution to 
validate our theoretic results in Section \ref{sec:main}.
For all the tests, the body forces $\bld f$ and $g$ are chosen such that
the exact solution $(\bld u, p)$ takes the following form:
\begin{align*}
 \bld u =&\; \left(\sin(2\,\pi x)\sin(2\,\pi y), \sin(2\,\pi x)\sin(2\,\pi y)\right)^T,\\
 p =&\; \sin(m\,\pi x)\sin(m\,\pi y), \text{ where $m$ is a fixed number.}
\end{align*}
We take $\nu = 1, \gamma=1$, and $m = 2$ for the first test, $\nu = 1$, $\gamma=1$, and $m = 20$ for the second test, and $\nu = 0.0001$, $\gamma=1$, and $m = 2$ for the third test.
The first two tests are in the Stokes-dominated regime, while the last test is in the Darcy-dominated regime.
The second test exam the effect of pressure regularity on the convergence of the velocity field.

In Table \ref{table-1}, 
we present the $L^2$-convergence rates for $\ml^h$, $\bld u^h$, $p^h$, and $\bld u^{*,h}$
for the HDG method \eqref{Hdiv-HDG-equations} with polynomial degree varying from $k=0$ to $k=3$
on rectangular meshes. The first level mesh  consists of 
$8\times 8$ congruent squares, and the consequent meshes are obtained by uniform refinements.

In Table \ref{table-1n0},
we present the same convergence study 
with polynomial degree varying from $k=1$ to $k=3$
on triangular meshes. The first level mesh  consists of 
$2\times 4\times 4$ congruent triangles, and the consequent meshes are obtained by uniform refinements.

In both  tables, $N_{ele}$ denotes the number of elements, $N_{global}$ denotes the
number of globally coupled degrees of freedom  and
$N_{local}$ denotes the number of local (static-condensed) degrees of freedom.

Here, the local postprocessing $\bld u^{*,h}\in \bpol_{k+1}(K)$ is defined element-wise 
by the
following set of equations:
 \begin{alignat*}{2}
  (\gradv \bld u^{*,h},\gradv \bld v)_K = &\; (\ml^h, \gradv \bld v)_K&&\quad \forall \bld v\in\bpol_{k+1}(K),\\
  (\bld u^{*,h},\bld w)_K = &\; (\bld u^h, \bld w)_K&&\quad \forall \bld w\in\bpol_{0}(K).
 \end{alignat*}
It is quite easy to show (c.f. \cite{Stenberg91,CockburnGopalakrishnanSayas10}) that $\bld u^{*,h}$ convergence with an order of $k+2-\delta_{0,k}$.

From the results for the first test in Table \ref{table-1}, we observe optimal convergence order of $k+1$ for all the three variables
$\ml^h, \bld u^h,$ and $p^h$, and superconvergence order of $k+2$ for the postprocessing $\bld u^{*,h}$.
The convergence results for $\ml^h, \bld u^h,$ and $p^h$ are in full agreements with the theoretic predictions in Corollary \ref{coro:ns-error} and
Theorem \ref{thm:ns-superror}.
The superconvergence for $\bld u^{*,h}$ is in agreement with the theoretic predictions
in Theorem \ref{thm:ns-superror} for $k\ge 1$, while the superconvergence of $\bld u^{*,h}$ for $k=0$ is not covered by our analysis in Theorem \ref{thm:ns-superror}. 

From the results for the second test in Table \ref{table-1}, we observe the same $L^2$-errors in
$\ml^h, \bld u^h,$ and $\bld u^{*,h}$  as the corresponding ones  in the first test.
This indicates  velocity error is independent of the pressure, in full agreement with the estimates in  Corollary \ref{coro:ns-error}.
We also observe the $L^2$-error for $p^h$ is significantly larger than that for the first test. It is clear that, in this test, convergence for pressure is not in the asymptotic regime yet.

From the results for the third test in Table \ref{table-1}, we observe similar convergence rates 
for all the variables as the first test. This indicates uniform stability of the proposed HDG method.

The convergence results on triangular meshes in Table \ref{table-1n0} are similar to that on rectangular meshes in Table \ref{table-1}.


\begin{table}[ht]
\caption{History of convergence for $H(\mathrm{div})$-conforming HDG method on square meshes.} 
\centering 
\resizebox{01.\columnwidth}{!}{%
\begin{tabular}{|c|c|c c|c c|c c|c c|c c|}
\hline
  &  mesh & \multicolumn{2}{c|}{$D.O.F.$}
          & \multicolumn{2}{c|}{$\|\ml - \ml^h \|_{\Oh}$}
          & \multicolumn{2}{c|}{$\|\bld u-\bld u^h\|_{\Oh}$}
          & \multicolumn{2}{c|}{$\|p - p^h \|_{\Oh}$}
          & \multicolumn{2}{c|}{$\|\bld u-\bld u^{*,h}\|_{\Oh}$}
\tabularnewline
$k$ & $N_{ele}$ & $N_{global}$ & $N_{local}$& error & order & error & order & error & order & error & order \tabularnewline
\hline
\multicolumn{12}{|c|}{First test. $\nu = 1,\gamma =1, m = 2$.} \\
\hline
\multirow{5}{2mm}{0} 
&     64&   288&    704&  2.393e+00& -& 1.622e-01& -& 4.133e-01& -& 5.398e-02& - \\
&    256&  1088&   2816&  1.224e+00& 0.97& 8.043e-02& 1.01& 1.300e-01& 1.67& 1.337e-02& 2.01 \\
&   1024&  4224&  11264&  6.157e-01& 0.99& 4.011e-02& 1.00& 4.782e-02& 1.44& 3.335e-03& 2.00 \\
&   4096& 16640&  45056&  3.083e-01& 1.00& 2.004e-02& 1.00& 2.108e-02& 1.18& 8.331e-04& 2.00
\tabularnewline
\hline
\multirow{5}{2mm}{1} 
&   64&     576&    1856& 4.951e-01& -& 1.829e-02& -& 1.178e-01& -& 6.955e-03& -\\
&  256&    2176&    7424& 1.286e-01& 1.94& 4.211e-03& 2.12& 1.559e-02& 2.92& 7.790e-04& 3.16\\
& 1024&    8448&   29696& 3.245e-02& 1.99& 1.026e-03& 2.04& 2.518e-03& 2.63& 9.367e-05& 3.06\\
& 4096&   33280&  118784& 8.131e-03& 2.00& 2.546e-04& 2.01& 5.171e-04& 2.28& 1.159e-05& 3.02
\tabularnewline
\hline
\multirow{5}{2mm}{2} 
&     64   &  864  &  3328 &5.810e-02& -& 1.399e-03& -& 1.281e-02& -& 7.069e-04& -\\
&    256   & 3264  & 13312 &7.352e-03& 2.98& 1.481e-04& 3.24& 9.173e-04& 3.80& 4.129e-05& 4.10\\
&   1024   &12672  & 53248 &9.223e-04& 2.99& 1.731e-05& 3.10& 7.743e-05& 3.57& 2.533e-06& 4.03\\
&   4096   &49920  &212992 &1.154e-04& 3.00& 2.122e-06& 3.03& 8.097e-06& 3.26& 1.575e-07& 4.01
\tabularnewline
\hline
\multirow{5}{2mm}{3} 
&    64&    1152&    5248& 5.598e-03& -& 9.147e-05& -& 1.740e-03& -& 6.264e-05& -\\
&   256&    4352&   20992& 3.600e-04& 3.96& 4.127e-06& 4.47& 9.163e-05& 4.25& 2.049e-06& 4.93\\
&  1024&   16896&   83968& 2.272e-05& 3.99& 2.222e-07& 4.21& 5.203e-06& 4.14& 6.492e-08& 4.98\\
&  4096&   66560&  335872& 1.424e-06& 4.00& 1.325e-08& 4.07& 3.112e-07& 4.06& 2.036e-09& 5.00
\tabularnewline
\hline
\multicolumn{12}{|c|}{Second test. $\nu = 1,\gamma =1, m = 20$.} \\
\hline
\multirow{5}{2mm}{0} 
&     64&   288&    704&  2.393e+00& -& 1.622e-01& -& 6.293e-01& -& 5.398e-02& - \\
&    256&  1088&   2816&  1.224e+00& 0.97& 8.043e-02& 1.01& 4.983e-01& 0.34& 1.337e-02& 2.01 \\
&   1024&  4224&  11264&  6.157e-01& 0.99& 4.011e-02& 1.00& 3.494e-01& 0.51& 3.335e-03& 2.00 \\
&   4096& 16640&  45056&  3.083e-01& 1.00& 2.004e-02& 1.00& 1.934e-01& 0.85& 8.331e-04& 2.00
\tabularnewline
\hline
\multirow{5}{2mm}{1} 
&   64&     576&    1856& 4.951e-01& -& 1.829e-02& -& 5.117e-01& -& 6.955e-03& -\\
&  256&    2176&    7424& 1.286e-01& 1.94& 4.211e-03& 2.12& 4.186e-01& 0.29& 7.790e-04& 3.16\\
& 1024&    8448&   29696& 3.245e-02& 1.99& 1.026e-03& 2.04& 1.631e-01& 1.36& 9.367e-05& 3.06\\
& 4096&   33280&  118784& 8.131e-03& 2.00& 2.546e-04& 2.01& 4.573e-02& 1.83& 1.159e-05& 3.02
\tabularnewline
\hline
\multirow{5}{2mm}{2} 
&     64   &  864  &  3328 &5.810e-02& -& 1.399e-03& -& 4.917e-01& -& 7.069e-04& -\\
&    256   & 3264  & 13312 &7.352e-03& 2.98& 1.481e-04& 3.24& 2.722e-01& 0.85& 4.129e-05& 4.10\\
&   1024   &12672  & 53248 &9.223e-04& 2.99& 1.731e-05& 3.10& 5.209e-02& 2.39& 2.533e-06& 4.03\\
&   4096   &49920  &212992 &1.154e-04& 3.00& 2.122e-06& 3.03& 7.240e-03& 2.85& 1.575e-07& 4.01
\tabularnewline
\hline
\multirow{5}{2mm}{3} 
&    64&    1152&    5248& 5.598e-03& -& 9.147e-05& -& 4.744e-01& -& 6.264e-05& -\\
&   256&    4352&   20992& 3.600e-04& 3.96& 4.127e-06& 4.47& 1.362e-01& 1.80& 2.049e-06& 4.93\\
&  1024&   16896&   83968& 2.272e-05& 3.99& 2.222e-07& 4.21& 1.252e-02& 3.44& 6.492e-08& 4.98\\
&  4096&   66560&  335872& 1.424e-06& 4.00& 1.325e-08& 4.07& 8.610e-04& 3.86& 2.036e-09& 5.00
\tabularnewline
\hline
\multicolumn{12}{|c|}{Third test. $\nu = 0.0001,\gamma =1, m = 2$.} \\
\hline
\multirow{5}{2mm}{0} 
&  64 &   288 &   704 &2.399e+00&  -  & 1.621e-01& -& 1.567e-01& -&5.329e-02& - \\
& 256 &  1088 &  2816 &1.226e+00& 0.97& 8.039e-02& 1.01& 7.970e-02& 0.98&1.313e-02&2.02 \\
&1024 &  4224 & 11264 &6.160e-01& 0.99& 4.011e-02& 1.00& 4.002e-02& 0.99& 3.268e-03 &2.01 \\
&4096 & 16640 & 45056 &3.083e-01& 1.00& 2.004e-02& 1.00& 2.003e-02& 1.00& 8.164e-04 &2.00 
\tabularnewline
\hline
\multirow{5}{2mm}{1} 
 &  64  &   576  &  1856 &3.779e-01 &- &1.679e-02 &- &2.967e-02 &- &6.192e-03 &- \\
 &  256  &  2176  &  7424 &9.967e-02 &1.92 &4.096e-03 &2.04 &7.556e-03 &1.97 &7.509e-04 &3.04 \\
 & 1024  &  8448  & 29696 &2.761e-02 &1.85 &1.020e-03 &2.01 &1.898e-03 &1.99 &9.297e-05 &3.01 \\
 & 4096  & 33280  &118784 &7.630e-03 &1.86 &2.544e-04 &2.00 &4.750e-04 &2.00 &1.157e-05 &3.01 
\tabularnewline
\hline
\multirow{5}{2mm}{2} 
  &   64 &    864  &  3328 &4.844e-02 &- &1.223e-03 &- &3.755e-03 &- &6.990e-04 &- \\
   &    256 &   3264  & 13312 &6.177e-03 &2.97 &1.399e-04 &3.13 &4.773e-04 &2.98 &4.215e-05 &4.05 \\
    &   1024 &  12672  & 53248 &8.198e-04 &2.91 &1.708e-05 &3.03 &5.992e-05 &2.99 &2.571e-06 &4.04\\ 
    &   4096 &  49920  &212992 &1.099e-04 &2.90 &2.118e-06 &3.01 &7.498e-06 &3.00 &1.584e-07 &4.02 
\tabularnewline
\hline
\multirow{5}{2mm}{3} 
&     64&    1152&   5248 &4.973e-03 &- &7.545e-05 &- &3.567e-04 &- &6.160e-05 &- \\
&    256&    4352&   20992& 3.248e-04&3.94 &3.766e-06 &4.32 &2.264e-05 &3.98 &2.038e-06 &4.92 \\
&   1024&   16896&   83968& 2.136e-05& 3.93& 2.173e-07& 4.12 &1.420e-06& 3.99 &6.486e-08& 4.97 \\
&   4096&   66560&  335872& 1.390e-06& 3.94& 1.322e-08& 4.04 &8.885e-08& 4.00 &2.035e-09& 4.99 
\tabularnewline
\hline
\end{tabular}}
\label{table-1} 
\end{table}


\begin{table}[ht]
\caption{History of convergence for $H(\mathrm{div})$-conforming HDG method on triangular meshes.} 
\centering 
\resizebox{01.\columnwidth}{!}{%
\begin{tabular}{|c|c|c c|c c|c c|c c|c c|}
\hline
  &  mesh & \multicolumn{2}{c|}{$D.O.F.$}
          & \multicolumn{2}{c|}{$\|\ml - \ml^h \|_{\Oh}$}
          & \multicolumn{2}{c|}{$\|\bld u-\bld u^h\|_{\Oh}$}
          & \multicolumn{2}{c|}{$\|p - p^h \|_{\Oh}$}
          & \multicolumn{2}{c|}{$\|\bld u-\bld u^{*,h}\|_{\Oh}$}
\tabularnewline
$k$ & $N_{ele}$ & $N_{global}$ & $N_{local}$& error & order & error & order & error & order & error & order \tabularnewline
\hline
\multicolumn{12}{|c|}{First test. $\nu = 1, \gamma = 1, m = 2$.} \\
\hline
\multirow{5}{2mm}{1} 
&32      &256     &   555 &1.567e+00 &-  &8.253e-02 &-  &5.144e-01 &-  &5.985e-02 &-\\
&128     &960     &  2203 &3.378e-01 &2.21  &3.220e-02 &1.36  &1.158e-01 &2.15  &6.449e-03 &3.21\\
&512     &3712    &  8763 &8.757e-02 &1.95  &8.073e-03 &2.00  &2.712e-02 &2.09  &8.455e-04 &2.93\\
&2048    &14592   & 34939 &2.213e-02 &1.98  &2.018e-03 &2.00  &6.559e-03 &2.05  &1.073e-04 &2.98\\
&8192    &57856   &139515 &5.550e-03 &2.00  &5.045e-04 &2.00  &1.615e-03 &2.02  &1.348e-05 &2.99
\tabularnewline
\hline
\multirow{5}{2mm}{2} 
&      32   &   368  &   1163 &9.679e-02 &-  &3.553e-02 &-  &4.949e-02 &-  &2.407e-03 &-\\
&     128   &  1376  &   4635 &3.471e-02 &1.48  &3.432e-03 &3.37  &1.183e-02 &2.07  &4.712e-04 &2.35\\
&     512   &  5312  &  18491 &4.381e-03 &2.99  &4.359e-04 &2.98  &1.488e-03 &2.99  &2.964e-05 &3.99\\
&    2048   & 20864  &  73851 &5.488e-04 &3.00  &5.472e-05 &2.99  &1.862e-04 &3.00  &1.854e-06 &4.00\\
&    8192   & 82688  & 295163 &6.864e-05 &3.00  &6.847e-06 &3.00  &2.325e-05 &3.00  &1.159e-07 &4.00
\tabularnewline
\hline
\multirow{5}{2mm}{3} 
&      32 &     480  &   1995 &3.551e-02 &-  &1.557e-03 &-  &2.159e-02 &-  &1.760e-03 &-\\
&     128 &    1792  &   7963 &1.815e-03 &4.29  &.245e-04 &2.79  &9.946e-04 &4.44  &4.237e-05 &5.38\\
&     512 &    6912  &  31803 &1.172e-04 &3.95  &1.418e-05 &3.99  &6.099e-05 &4.03  &1.356e-06 &4.97\\
&    2048 &   27136  & 127099 &7.398e-06 &3.99  &8.883e-07 &4.00  &3.774e-06 &4.01  &4.266e-08 &4.99\\
&    8192 &  107520  & 508155 &4.638e-07 &4.00  &5.555e-08 &4.00  &2.348e-07 &4.01  &1.336e-09 &5.00
\tabularnewline
\hline
\multicolumn{12}{|c|}{Second test. $\nu = 1, \gamma = 1, m = 20$.} \\
\hline
\multirow{5}{2mm}{1} 
&      32&      256&      555 &1.582e+00 &-  &8.376e-02 &-  &1.022e+00 &-  &6.085e-02 &-\\
&     128&      960&     2203 &3.652e-01 &2.12  &3.256e-02 &1.36  &6.395e-01 &0.68  &7.492e-03 &3.02\\
&     512&     3712&     8763 &8.758e-02 &2.06  &8.073e-03 &2.01  &3.010e-01 &1.09  &8.457e-04 &3.15\\
&    2048&    14592&    34939 &2.213e-02 &1.98  &2.018e-03 &2.00  &1.091e-01 &1.46  &1.073e-04 &2.98\\
&    8192&    57856&   139515 &5.550e-03 &2.00  &5.045e-04 &2.00  &3.012e-02 &1.86  &1.348e-05 &2.99
\tabularnewline
\hline
\multirow{5}{2mm}{2} 
&      32&      368&     1163& 1.813e-01& -&  3.599e-02& -&  6.015e-01& -&  5.208e-03& -\\
&     128&     1376&     4635& 3.471e-02& 2.39&  3.432e-03& 3.39&  4.004e-01& 0.59&  4.715e-04& 3.47\\
&     512&     5312&    18491& 4.381e-03& 2.99&  4.359e-04& 2.98&  1.741e-01& 1.20&  2.964e-05& 3.99\\
&    2048&    20864&    73851& 5.488e-04& 3.00&  5.472e-05& 2.99&  3.076e-02& 2.50&  1.854e-06& 4.00\\
&    8192&    82688&   295163& 6.864e-05& 3.00&  6.847e-06& 3.00&  4.192e-03& 2.88&  1.159e-07& 4.00
\tabularnewline
\hline
\multirow{5}{2mm}{3} 
&      32&      480&     1995 &4.193e-02 &-  &1.793e-03 &-  &5.592e-01 &-  &1.907e-03 &-\\
&     128&     1792&     7963 &1.815e-03 &4.53  &2.245e-04 &3.00  &3.068e-01 &0.87  &4.237e-05 &5.49\\
&     512&     6912&    31803 &1.172e-04 &3.95  &1.418e-05 &3.99  &3.201e-02 &3.26  &1.356e-06 &4.97\\
&    2048&    27136&   127099 &7.398e-06 &3.99  &8.883e-07 &4.00  &1.507e-03 &4.41  &4.266e-08 &4.99\\
&    8192&   107520&   508155 &4.638e-07 &4.00  &5.555e-08 &4.00  &6.589e-05 &4.52  &1.336e-09 &5.00
\tabularnewline
\hline
\multicolumn{12}{|c|}{Third test. $\nu = 0.0001, \gamma = 1, m = 2$.} \\
\hline
\multirow{5}{2mm}{1} 
 &     32  &    256  &    555& 1.436e+00& -&  7.825e-02& -&  3.891e-02& -&  5.242e-02& -\\
 &    128  &    960  &   2203& 3.932e-01& 1.87&  3.013e-02& 1.38&  1.949e-02& 1.00&  8.254e-03& 2.67\\
 &    512  &   3712  &   8763& 1.241e-01& 1.66&  7.719e-03& 1.96&  4.951e-03& 1.98&  1.497e-03& 2.46\\
 &   2048  &  14592  &  34939& 3.414e-02& 1.86&  1.962e-03& 1.98&  1.243e-03& 1.99&  2.153e-04& 2.80\\
 &   8192  &  57856  & 139515& 7.387e-03& 2.21&  4.987e-04& 1.98&  3.110e-04& 2.00&  2.215e-05& 3.28
\tabularnewline
\hline
\multirow{5}{2mm}{2} 
 &     32  &    368  &   1163& 3.442e-01& -& 3.288e-02& -&  2.266e-02& -&  1.065e-02& -\\
 &    128  &   1376  &   4635& 6.978e-02& 2.30&  3.132e-03& 3.39&  2.169e-03& 3.39&  9.514e-04& 3.48\\
 &    512  &   5312  &  18491& 1.042e-02& 2.74&  4.049e-04& 2.95&  2.748e-04& 2.98&  7.304e-05& 3.70\\
 &   2048  &  20864  &  73851& 1.085e-03& 3.26&  5.285e-05& 2.94&  3.447e-05& 3.00&  4.209e-06& 4.12\\
 &   8192  &  82688  & 295163& 9.842e-05& 3.46&  6.770e-06& 2.96&  4.313e-06& 3.00&  1.938e-07& 4.44
\tabularnewline
\hline
\multirow{5}{2mm}{3} 
&      32&      480 &    1995& 3.231e-02& -&  1.545e-03& -&  6.370e-04& - & 1.717e-03& -\\
&     128&     1792 &    7963& 6.311e-03& 2.36&  1.928e-04& 3.00&  2.490e-05& 4.68 & 6.757e-05& 4.67\\
&     512&     6912 &   31803& 3.905e-04& 4.01&  1.295e-05& 3.90&  1.348e-06& 4.21 & 2.318e-06& 4.87\\
&    2048&    27136 &  127099& 1.768e-05& 4.46&  8.583e-07& 3.91&  8.055e-08& 4.07 & 6.261e-08& 5.21\\
&    8192&   107520 &  508155& 7.284e-07& 4.60&  5.500e-08& 3.96&  4.974e-09& 4.02 & 2.111e-09& 4.89
\tabularnewline
\hline
\end{tabular}}
\label{table-1n0} 
\end{table}

 \section{Conclusion}
 \label{sec:conclusion}
 We present and analyze a class of parameter-free superconvergent $H(\mathrm{div})$-conforming HDG method on both  simplicial and rectangular meshes for the Brinkman equations.
 Numerical results in two dimensions are presented to validate the theoretic findings.

 \section*{Acknowledgements}
G. Fu would like to thank Matthias Maier from the University of Minnesota for
providing the general framework of the HDG code in deal.II and for many helpful discussions on numerical computations with deal.II.
He would also like to thank Christoph Lehrenfeld from University of G{\"o}ttingen  for
many helpful discussions and hands-on tutorials on numerical computation using NGSolve's python interface.

 \section*{Appendix: Proof of Lemma \ref{lemma:key}}
In this Appendix, we prove Lemma \ref{lemma:key}.
We use the following result, whose proof comes directly from Lemma \ref{lemma:projection-b} and the usual scaling argument.
\begin{lemma}
\label{lemma:1}
  Given $(\mr^h, \zzhat^h)\in \GG(K)\times \MM(\dK)$ where
 \[
  \MM(\dK):=\{\vvhat\in L^2(\dK)^d:\;\vvhat|_F\in \MM(F)\;\;\forall F\in\mathcal{F}(K)\},
 \]
there exists a unique function $\bld w^h\in \VV(K)$ such that
\begin{alignat*}{2}
 ({\bld w^h},{\vv^h})_K = &\; ({\divv \mr^h},{\vv^h})_K&&\;\;\forall \vv^h\in \divv\GG(K),\\
 \bintK{\trn(\bld w^h)}{\trn(\vvhat)} = &\; \bintK{\trn(\zzhat^h)}{\trn(\vvhat)}&&\;\;\forall \vvhat^h\in \MM(\dK).
\end{alignat*}
Moreover, there exists a constant $C$ only depending on the shape-regularity of the element $K$ such that
\begin{align}
 \label{estimate-2}
 \|\bld w^h\|_K\le C\left(\|\divv \mr^h\|_K^2 + \sum_{F\in\mathcal{F}(K)}h_F\|\trn(\zzhat^h)\|_F^2\right)^{1/2}
\end{align}
\end{lemma}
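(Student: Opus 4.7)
The plan is a two-step proof. For existence and uniqueness, I would identify the system \eqref{g-proj} defining $\bld w^h$ as being structurally identical, upon relabeling the data, to the system \eqref{bu-projection} that defines the Raviart--Thomas/BDFM projection $\Piww$: $\divv\mr^h$ takes the role of $\bld u$ in the volumetric condition and $\trn(\zzhat^h)$ takes the role of $\trn(\bld u)$ in the facet condition. Since Lemma \ref{lemma:projection-b} already guarantees unique solvability of this square linear system on $\VV(K)$ for any admissible right-hand side, existence and uniqueness of $\bld w^h$ follow immediately (the homogeneous problem forces $\bld w^h = \Piww\,0 = 0$).

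For the stability bound \eqref{estimate-2} I would run the usual scaling argument. Pulling the local problem back to the reference element $\Kbar$ via the contravariant Piola transform on the vector-valued unknowns $\bld w^h,\mr^h$ and the ordinary pullback on the multiplier $\zzhat^h$, the pulled-back system is a \emph{fixed} invertible linear map between fixed finite-dimensional spaces. Hence there is a reference constant $C_{\Kbar}$, depending only on the polynomial degree $k$ and the geometry of $\Kbar$, such that
\[
\|\bar{\bld w}^h\|_{\Kbar}\;\le\;C_{\Kbar}\bigl(\|\divv\bar{\mr}^h\|_{\Kbar}+\|\trn(\bar{\zzhat}^h)\|_{\partial\Kbar}\bigr).
\]
I would then push this forward to $K$ using the standard Piola scalings $\|\vv\|_K\sim h_K^{(2-d)/2}\|\bar{\vv}\|_{\Kbar}$, $\|\divv\mr^h\|_K\sim h_K^{-d/2}\|\divv\bar{\mr}^h\|_{\Kbar}$, and $\|\trn(\zzhat^h)\|_F\sim h_F^{(d-1)/2}\|\trn(\bar{\zzhat}^h)\|_{\bar F}$, together with the $1/\det\Phi_K'$ factor that appears when the bilinear form $(\bld w^h,\vv)_K$ and the trace pairing $\langle\trn(\bld w^h),\trn(\vvhat)\rangle_F$ are rewritten in reference coordinates. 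A direct accounting of the resulting powers of $h_K$ and $h_F$ collapses exactly to \eqref{estimate-2}.

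The main obstacle will be the $h$-power book-keeping. A careless application of the Piola scalings yields spurious factors of $h_K$ in front of $\|\divv\mr^h\|_K$ and $h_F^{-1/2}$ in front of $\|\trn(\zzhat^h)\|_F$. The correct cancellation relies on the weight $1/\det\Phi_K'$ from the pulled-back bilinear form $(\bld w^h,\vv)_K$, which contributes an $h_K^{-1}$ that precisely offsets the $h_K^{d/2}$ coming from $\|\divv\bar{\mr}^h\|_{\Kbar}$, and on the analogous cancellation on the facets involving the surface Jacobian $J_F\sim h_F^{d-1}$. Once this accounting is done cleanly, the stated bound falls out.
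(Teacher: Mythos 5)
Your proposal is correct and follows essentially the same route as the paper, which likewise disposes of this lemma by invoking Lemma \ref{lemma:projection-b} (the unisolvence of the Raviart--Thomas/BDFM degrees of freedom, here applied with $\divv\mr^h$ as the volumetric data and $\trn(\zzhat^h)$ as the facet data, which may be prescribed independently) together with the usual Piola scaling argument for \eqref{estimate-2}. The only slip is notational: the defining system is the one displayed in Lemma \ref{lemma:1} itself, not \eqref{g-proj} of Lemma \ref{lemma:key}, but this does not affect your argument.
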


\subsection*{Proof of Lemma \ref{lemma:key}}
\begin{proof}
We only prove the existence and uniqueness of the function $\mr^h\in \GG(K)$ satisfying equations \eqref{g-proj} on the reference element $K=\Kbar$,
the result on an affine-mapped element $K$ can be easily obtained from that on the reference element
(cf. \cite[Chapter 2]{BoffiBrezziFortin13}), and the estimate
\eqref{estimate-1} is a direct consequence of the usual scaling argument and equivalence of norms on finite-dimensional spaces.

We first show that \eqref{g-proj} define a square system.
We use the concept of an M-decomposition \cite{CockburnFuSayas16,CockburnFu17a,CockburnFu17b} to prove it.

By the choice of $\GG^\mathrm{row}(K)$ in Table \ref{table-example-m}, we have
the pair $ \GG^\mathrm{row}(K)\times \pol_k(K)$
admits an M-decomposition with the trace space
\[
 M(\dK):=\{\wwhat\in L^2(\dK):\;\;\wwhat|_F\in\pol_k(F)\;\;\forall F\in\mathcal{F}(K)\}.
\]
Hence,
\begin{align*}
  \dim\GG^\mathrm{row}(K)+
  \dim\pol_k(K) = &\;
  \dim\GG^\mathrm{row}_\mathrm{sbb}(K)
  +
  \dim\divs\GG^\mathrm{row}(K)\\
&\;  +\dim\grads \pol_k(K)
  +\dim M(\dK).
\end{align*}
Here
$
 \GG^\mathrm{row}_\mathrm{sbb}(K):=
 \{
 \vv\in\GG^\mathrm{row}(K):\;\divs \vv=0,\; \trn(\vv)=0 \text{ on }\dK
 \}.
$
This immediately implies that
\begin{align}
\label{dim-1}
  \dim\GG(K)+
  \dim\pol_k(K)^d = &\;
  \dim\GG_\mathrm{sbb}(K)
  +
  \dim\divv\GG(K)\\
&\;  +\dim\gradv \pol_k(K)^d
  +\dim \MM(\dK).\nonumber
\end{align}

By Lemma \ref{lemma:projection-b}, we have
\[
\dim \VV(K) =  \dim\divv\GG(K)+\dim \trn(\MM(\dK)).
\]
Combing the above equality with \eqref{dim-1} and reordering the terms, we get
\begin{align}
\label{dim-2}
  \dim\GG(K) = &\;
  \dim\GG_\mathrm{sbb}(K)
   +\dim \trt(\MM(\dK))
  \\
&\; +\dim\VV(K)-  \dim\pol_k(K)^d +\dim\gradv \pol_k(K)^d.\nonumber
\end{align}

Since it is trivial to prove that
\[
 \dim\VV(K)-  \dim\pol_k(K)^d +\dim\gradv \pol_k(K)^d = \dim \gradv \VV(K)
\]
for the vector space $\VV(K)$ in Table \ref{table-example-m}, we conclude that equations
\eqref{g-proj} is indeed a square system.
Hence, we are left to prove the uniqueness.

To this end, we take $\bld z^h=0, \zzhat^h=0$ in \eqref{g-proj}.
By \eqref{g-proj-2}, we have
\begin{align}
\label{t-t}
 \trt(\mr^h\n)=0.
\end{align}

By \eqref{g-proj-1}, we have, for all $\vv\in\VV(K)$,
\begin{align*}
0=  (\mr^h, \gradv \vv)_K
=&\; -(\divv\mr^h, \vv)_K
+\bintK{\trn(\mr^h\n)}{\trn(\vv)}
+\bintK{\trt(\mr^h\n)}{\trt(\vv)}\\
=&\;
-(\divv\mr^h, \vv)_K
+\bintK{\trn(\mr^h\n)}{\trn(\vv)}.
\end{align*}
Then, by Lemma \ref{lemma:1}, there exists a function $\vv\in\VV(K)$ such that
\[-(\divv\mr^h, \vv)_K
+\bintK{\trn(\mr^h\n)}{\trn(\vv)}
=(\divv\mr^h, \divv\mr^h)_K
+\bintK{\trn(\mr^h\n)}{\trn(\mr^h\n)}.
\]

Hence, $\divv\mr^h=0$ and $\trn(\mr^h\n)=0$.
This implies that $\mr^h\in \GG_{\mathrm{sbb}}(K)$. Then, taking
$\mg^h :=\mr^h\in \GG_{\mathrm{sbb}}(K)$ in \eqref{g-proj-1},  we conclude that $\mr^h=0$.

This conclude the proof of Lemma \ref{lemma:key}.
\end{proof}


\bibliographystyle{siam}

\begin{thebibliography}{10}

\bibitem{AGMR15}
{\sc V.~Anaya, G.N.~Gatica, D.~Mora, and R.~Ruiz-Baier}, {\em An augmented velocity-vorticity- pressure 
formulation for the Brinkman equations}, Internat. J. Numer. Methods Fluids, 79 (2015), pp.~109--137.

\bibitem{ArnoldAwanou14}
{\sc D.~N. Arnold and G.~Awanou}, {\em Finite element differential forms on
  cubical meshes}, Math. Comp., 83 (2014), pp.~1551--1570.


\bibitem{BadiaCodina09}
{\sc S.~Badia and R.~Codina}, {\em Unified stabilized finite element
  formulations for the {S}tokes and the {D}arcy problems}, SIAM J. Numer.
  Anal., 47 (2009), pp.~1971--2000.

\bibitem{dealii84}
{\sc W.~Bangerth, D.~Davydov, T.~Heister, L.~Heltai, G.~Kanschat,
  M.~Kronbichler, M.~Maier, B.~Turcksin, and D.~Wells}, {\em The
  \texttt{deal.II} library, version 8.4}, Journal of Numerical Mathematics, 24
  (2016).

\bibitem{BoffiBrezziFortin13}
{\sc D.~Boffi, F.~Brezzi, and M.~Fortin}, {\em Mixed finite element methods and
  applications}, vol.~44 of Springer Series in Computational Mathematics,
  Springer, Heidelberg, 2013.

\bibitem{BrennerScott08}
{\sc S.~C. Brenner and L.~R. Scott}, {\em The mathematical theory of finite
  element methods}, vol.~15 of Texts in Applied Mathematics, Springer, New
  York, third~ed., 2008.

\bibitem{BrezziDouglasDuranFortin87}
{\sc F.~Brezzi, J.~{Douglas, Jr.}, R.~E. {Dur\'an}, and M.~Fortin}, {\em Mixed
  finite element methods for second order elliptic problems in three
  variables}, Numer. Math., 51 (1987), pp.~237--250.

\bibitem{BrezziDouglasFortinMarini87}
{\sc F.~Brezzi, J.~{Douglas, Jr.}, M.~Fortin, and L.~D. Marini}, {\em Efficient
  rectangular mixed finite element methods in two and three space variables},
  RAIRO Mod\'el. Math. Anal. Num\'er., 21 (1987), pp.~581--604.

\bibitem{BrezziDouglasMarini85}
{\sc F.~Brezzi, J.~{Douglas,~Jr.}, and L.~D. Marini}, {\em Two families of
  mixed finite elements for second order elliptic problems}, Numer. Math., 47
  (1985), pp.~217--235.


\bibitem{CockburnFu17a}
{\sc B.~Cockburn and G.~Fu}, {\em Superconvergence by {M}-decompositions. {Part
  II}: {C}onstruction of two-dimensional finite elements}, ESAIM Math. Model.
  Numer. Anal.,  51 (2017), pp.~165--186.

\bibitem{CockburnFu17b}
\leavevmode\vrule height 2pt depth -1.6pt width 23pt, {\em Superconvergence by
  {M}-decompositions. {Part III}: {C}onstruction of three-dimensional finite
  elements}, ESAIM Math. Model. Numer. Anal.,  51 (2017), pp.~365--398.


\bibitem{CockburnFuSayas16}
{\sc B.~Cockburn, G.~Fu, and F.-J. Sayas}, {\em Superconvergence by
  {M}-decompositions. {Part I}: {G}eneral theory for {HDG} methods for
  diffusion}, Math. Comp.,  86 (2017), pp.~1609--1641.


\bibitem{CockburnGopalakrishnanSayas10}
{\sc B.~Cockburn, J.~Gopalakrishnan, and F.-J. Sayas}, {\em A projection-based
  error analysis of {HDG} methods}, Math. Comp., 79 (2010), pp.~1351--1367.



\bibitem{CockburnSayasHDGStokes14}
{\sc B.~Cockburn and F.-J. Sayas}, {\em Divergence--conforming {HDG} methods
  for {S}tokes flow}, Math. Comp., 83 (2014), pp.~1571--1598.


\bibitem{DiPietroDroniouErn10}
{\sc D.~A. Di~Pietro and A.~Ern}, {\em Discrete functional analysis tools for
  discontinuous {G}alerkin methods with application to the incompressible
  {N}avier-{S}tokes equations}, Math. Comp., 79 (2010), pp.~1303--1330.

\bibitem{Duran08}
{\sc R.~G. Dur{\'a}n}, {\em Mixed finite element methods}, Mixed finite
  elements, compatibility conditions, and applications. Book Series: Lecture
  Notes in Mathematics,  (2008), pp.~1--44.

\bibitem{GaticaGaticaSequeira15}
{\sc G.N.~Gatica, L.F.~Gatica, and F.~Sequeira}, {\em Analysis of an augmented pseudostress-
based mixed formulation for a nonlinear Brinkman model of porous media flow}, 
Comput. Methods Appl. Mech. Engrg., 289 (2015), pp.~104--130.

\bibitem{GiraultRaviart86}
{\sc V.~Girault and P.-A. Raviart}, {\em Finite element methods for
  {Navier-Stokes} equations}, Springer-Verlag, Berlin, 1986.

\bibitem{GuzmanNeilan12}
{\sc J.~Guzm{\'a}n and M.~Neilan}, {\em A family of nonconforming elements for
  the {B}rinkman problem}, IMA J. Numer. Anal., 32 (2012), pp.~1484--1508.


\bibitem{HowellNeilanWalkington16}
{\sc J.S.~Howell, M.~Neilan, and N.~Walkington}, {\em A Dual-Mixed Finite Element Method for
the Brinkman Problem}, SMAI J. Comput. Math., 2 (2016), pp.~1--17.

\bibitem{JuntunenStenberg10}
{\sc M.~Juntunen and R.~Stenberg}, {\em Analysis of finite element methods for
  the {B}rinkman problem}, Calcolo, 47 (2010), pp.~129--147.

\bibitem{KonnoStenberg11}
{\sc J.~K{\"o}nn{\"o} and R.~Stenberg}, {\em {$H ({\rm div})$}-conforming
  finite elements for the {B}rinkman problem}, Math. Models Methods Appl. Sci.,
  21 (2011), pp.~2227--2248.

\bibitem{KonnoStenberg12}
\leavevmode\vrule height 2pt depth -1.6pt width 23pt, {\em Numerical
  computations with {$H({\rm div})$}-finite elements for the {B}rinkman
  problem}, Comput. Geosci., 16 (2012), pp.~139--158.



\bibitem{Link14}
{\em A.~Linke}, {\em On the role of the Helmholtz decomposition in mixed methods for 
incompressible flows and a new variational crime}, Comput. Methods Appl. Mech. Engrg., 
268 (2014), pp.~782--800.

\bibitem{LinkeMerdon16}
{\sc A.~Linke and C.~Merdon}, {\em Pressure-robustness and discrete {H}elmholtz
  projectors in mixed finite element methods for the incompressible
  {N}avier-{S}tokes equations}, 2016.
\newblock WIAS Preprint 2250.

\bibitem{MardalTaiWinther02}
{\sc K.~A. Mardal, X.-C. Tai, and R.~Winther}, {\em A robust finite element
  method for {D}arcy-{S}tokes flow}, SIAM J. Numer. Anal., 40 (2002),
  pp.~1605--1631.

\bibitem{Nedelec80}
{\sc J.-C. N{\'e}d{\'e}lec}, {\em Mixed finite elements in {${\bf R}\sp{3}$}},
  Numer. Math., 35 (1980), pp.~315--341.

\bibitem{RaviartThomas77}
{\sc P.~A. Raviart and J.~M. Thomas}, {\em A mixed finite element method for
  second order elliptic problems}, in Mathematical Aspects of Finite Element
  Method, Lecture Notes in Math. 606, I.~Galligani and E.~Magenes, eds.,
  Springer-Verlag, New York, 1977, pp.~292--315.

\bibitem{Schoberl97}
{\sc J.~Sch{\"o}berl}, {\em {NETGEN} an advancing front 2d/3d-mesh generator
  based on abstract rules}, Computing and Visualization in Science, 1 (1997),
  pp.~41--52.

\bibitem{Schoberl16}
\leavevmode\vrule height 2pt depth -1.6pt width 23pt, {\em {NGSolve}
  [{C}omputer {S}oftware]}.
\newblock \url{https://sourceforge.net/projects/ngsolve/}, 2016.


\bibitem{Stenberg91}
{\sc R.~Stenberg}, {\em Postprocessing schemes for some mixed finite elements},
  RAIRO Mod\'el. Math. Anal. Num\'er., 25 (1991), pp.~151--167.

\bibitem{TaiWinther06}
{\sc X.-C. Tai and R.~Winther}, {\em A discrete de {R}ham complex with enhanced
  smoothness}, Calcolo, 43 (2006), pp.~287--306.

\bibitem{VassilevskiVilla14}
{\sc P.~S. Vassilevski and U.~Villa}, {\em A mixed formulation for the
  {B}rinkman problem}, SIAM J. Numer. Anal., 52 (2014), pp.~258--281.

\bibitem{XieXuXue08}
{\sc X.~Xie, J.~Xu, and G.~Xue}, {\em Uniformly-stable finite element methods
  for {D}arcy-{S}tokes-{B}rinkman models}, J. Comput. Math., 26 (2008),
  pp.~437--455.

\end{thebibliography}

\end{document}